\newtheorem{theorem}{Theorem}
\newtheorem{defn}[theorem]{Definition}
\newtheorem{prop}[theorem]{Proposition}
\renewenvironment{proof}{\par\noindent{\bf Proof.}}{$\square$\par\bigskip}
\newtheorem{lemma}[theorem]{Lemma}
\newtheorem{remark}[theorem]{Remark}
\newtheorem{cor}[theorem]{Corollary}
\newtheorem{conjecture}[theorem]{Conjecture}
\newtheorem{question}[theorem]{Question}
\newtheorem{thm}{Theorem}
\newtheorem*{unmthm}{Theorem}
\def\Z{\mathbb Z}
\def\Q{\mathbb Q}
\def\R{\mathbb R}
\def\C{\mathbb C}
\def\H{\mathbb H}
\def\mod{\operatorname{mod}}
\def\uk{\underline{k}}
\def\O{\operatorname{O}}
\def\o{\operatorname{o}}
\def\log{\operatorname{log}}
\def\mod{\rm mod \, }
\def\thtp{\operatorname{\theta_{\textit{f}}\,(\textit{p})}}
\def\thtpone{\operatorname{\theta_{\textit{f}_1}(\textit{p})}}
\def\thtptwo{\operatorname{\theta_{\textit{f}_2}(\textit{p})}}
\def\thtpi{\operatorname{\theta_{\textit{f}_i}(\textit{p})}}
\def\thtpione{\operatorname{\theta_{\textit{f}_{i+1}}(\textit{p})}}
\def\thtq{\operatorname{\theta_{\textit{f}}\,(\textit{q})}}
\def\Lf{\operatorname{\mathcal L_{\textit{f}}}}
\def\Lpi{\operatorname{\mathcal L_{\pi}}}
\def\Lj{{\mathcal L_{j}}}
\def\V{\operatorname{\mathrm{vol}(\Gamma)}}
\def\gp{\mathfrak p}
\def\gq{\mathfrak q}
\def\SL{\mathrm{SL}}
\begin{document}

\title{Pair correlation statistics for Sato-Tate sequences}
\author[Baskar Balasubramanyam]{Baskar Balasubramanyam}
\date{\today}
\address{Baskar Balasubramanyam, IISER Pune, Dr Homi Bhabha Road, Pashan, Pune - 411008, Maharashtra, India}
\email{baskar@iiserpune.ac.in}
\author[Kaneenika Sinha]{Kaneenika Sinha}
\address{Kaneenika Sinha, IISER Pune, Dr Homi Bhabha Road, Pashan, Pune - 411008, Maharashtra, India}
\email{kaneenika@iiserpune.ac.in}
\keywords{Pair Correlation, Sato-Tate distribution, Eichler-Selberg trace formula}
\subjclass[2010]{Primary 11F11, 11F25, 11F41}

\maketitle

\begin{abstract} We investigate the pair correlation statistics for sequences arising from Hecke eigenvalues with respect to spaces of primitive modular cusp forms.  We derive the average pair correlation function of Hecke angles lying in small subintervals of $[0,1]$.  The averaging is done over non-CM newforms of weight $k$ with respect to $\Gamma_0(N).$  We also derive similar statistics for  Hilbert modular forms and modular forms on hyperbolic 3-spaces.
\end{abstract}
\bigskip

\section{Introduction}\label{classical}

Let $k$ and $N$ be positive integers with $k$ even.  Let  $S(N,k)$ denote the space of modular cusp forms of weight $k$ with respect to $\Gamma_0(N).$  For $n \geq 1,$ let $T_n$ denote the $n$-th Hecke operator acting on $S(N,k).$  We denote the set of Hecke newforms  in $S(N,k)$  by $\mathcal F_{N,k}.$  Any $f(z) \in \mathcal F_{N,k}$ has a Fourier expansion
$$f(z) = \sum_{n=1}^{\infty} {n^{\frac{k-1}{2}}}a_f(n) q^n, \qquad q = e^{2\pi i z},$$
where 
$a_f(1) = 1$ and
$$\frac{T_n(f(z))}{n^{\frac{k-1}{2}}} = a_f(n) f(z),\,n \geq 1.$$
Let us fix $N$ and $k$ and consider a newform $f(z)$ in $\mathcal F_{N,k}.$  Let $p$ be a prime number with $(p,N) = 1.$  By a theorem of Deligne, the eigenvalues $a_f(p)$ lie in the interval $[-2,2].$  Denoting $a_f(p) = 2\cos \pi \thtp,\,$ with $ \thtp \in [0,1],$ we consider the Sato-Tate sequence
\begin{equation}\label{ST-sequence}
\{\thtp : \,p \text{ prime},\,(p,N) = 1,\,p \to \infty\} \subseteq [0,1].
\end{equation}

The famous Sato-Tate conjecture, proved by Barnet-Lamb, Geraghty, Harris and Taylor \cite{BGHT} in 2011, is the assertion that if $f$ is a non-CM  in $\mathcal F_{N,k},$ the above sequence is equidistributed in the interval $[0,1]$ with respect to the measure $\mu_\infty (t) dt$, where $\mu_{\infty}(t) = 2\sin^2(\pi t)$.
That is, for any interval $[a,b] \subset [0,1],$
$$\lim_{x \to \infty}\frac{1}{\pi_N(x)}\#\{p \leq x :\, (p,N) = 1,\,\thtp \in [a,b]\} = \int_a^b \mu_{\infty}(t) dt,$$
where $\pi_N(x)$ denotes the number of primes $p \leq x$ such that $(p,N)=1.$ 

The above theorem, considered among the most important mathematical breakthroughs in recent times, has several interesting interpretations.  Let $e(x) = e^{2 \pi i x}.$  By Weyl's classical equidistribution theory that relates equidistribution phenomena to the behaviour of exponential sums (see \cite[Section 7]{KN}), it is equivalent to the assertion that for $m \in \Z,$  the Weyl limits
$$C_m := \lim_{x \to \infty} \frac{1}{\pi(x)} \sum_{p \leq x \atop {(p,N) = 1}} e(m \thtp)$$
are equal to
\begin{equation}\label{limits}
\begin{cases}
1 &\text{ if }m = 0\\
-\frac{1}{2} &\text { if }m = \pm 1\\
0 &\text{ otherwise}.
\end{cases}
\end{equation}
A careful estimation of the Weyl sums $\sum_{n=1}^N e(mx_n)$ associated to a sequence $\{x_n\}$ often helps in obtaining error terms in equidistribution results.  In the case of the Sato-Tate sequence, the sums $\sum_{p \leq x} e(m \thtp)$ and the error terms in the Sato-Tate distribution are closely related to the analytic properties of symmetric power $L$-functions associated to $f$.  In this direction, for $k = 2$ and $N$ squarefree, Murty \cite{Murty} obtained explicit conditional estimates.  Under the assumption that all symmetric power $L$-functions for $f$ can be analytically continued to $\C,$  have suitable functional equations and satisfy the Generalized Riemann Hypothesis, he showed that for an interval $[a,b] \subset [0,1],$
$$\#\{p \leq x :\, (p,N) = 1,\,\thtp \in [a,b]\} = \pi_N(x)\int_a^b \mu_{\infty}(t) dt + O\left(x^{3/4}\sqrt{\log Nx}\right).$$
This explicit error term has been sharpened and generalized to all even $k \geq 2$ by Rouse and Thorner \cite{RT} under the assumption of similar analytic hypotheses.  

In the 1990s, another  distribution aspect of the Hecke angles $\{\thtp\}_{(p,N) = 1}$ was considered, namely their level spacing statistics. Assuming the Sato-Tate conjecture, one ``straightens out" the Sato-Tate sequence by defining
$$H(\thtp) := \int_{0}^{\thtp} 2\sin^2\pi t\, dt = \thtp - \frac{\sin 2\pi \thtp}{2\pi}.$$
The bijection $H$ on $[0,1]$  takes the distribution measure $\mu_{\infty}(t) dt$ to the Lebesgue measure $dx$. We arrange the set $\{H(\thtp\}_{p \leq x}$ in ascending order:
\begin{equation*}
 0 \leq H(\theta_f(p))_1 \leq H(\theta_f(p))_2 \dots \leq H(\theta_f(p))_{\pi_N(x)} \leq 1
 \end{equation*}
and consider the consecutive spacings $H(\theta_f(p))_{i+1} - H(\theta_f(p))_i,\,1 \leq i \leq \pi_N(x).$  Katz asked the following question.
\begin{question}[Katz] \label{Katz}
 Is the level spacing distribution of the sequence $\{H(\theta_f(p)\}_{p \to \infty}$ Poissonnian?  That is, for any $[a,b] \subset [0,\infty),$ is the limit
$$\lim_{x \to \infty}\frac{1}{\pi_N(x)}\#\left\{1 \leq i \leq \pi_N(x):\,H(\theta_f(p))_{i+1} - H(\theta_f(p))_i \in \left[\frac{a}{\pi_N(x)},\frac{b}{\pi_N(x)}\right]\right\} = \int_a^b e^{-t} dt?$$
\end{question}
In other words, Katz asked if the distribution of the spacings between straightened Hecke angles is the same as the distribution of spacings among points in a random sequence picked uniformly and independently in the unit interval.  

Katz and Sarnak \cite[Page 9]{KS} also considered a vertical variant of the above problem.  For a fixed prime $p,$ one defines the multisets
$$ A_p(N,k) = \{H(\thtp),\,f \in \mathcal F_{N,k}\} \subseteq [0,1].$$
The multiset $A_p(N,k)$ is then arranged in ascending order as follows:
$$
0 \leq H(\thtpone) \leq H(\thtptwo) \dots \leq H(\theta_{f_r} (p)) \leq 1.
$$
Here, $r = |\mathcal F_{N,k}|.$
Katz and Sarnak consider the level spacings among the multisets $A_p(N,k)$ for $k = 2,\   N \to \infty$ and ask if the level spacing distribution matches that of a sequence of independent and uniform random points on $[0,1]$. More precisely, they ask the following question:

\begin{question} [{\cite[Page 9]{KS}}] 
Let $p$ be a fixed prime.  Let $k=2$ and $N \neq p$ be a prime.  Is the level spacing distribution of the multisets $A_p(N,k)$ Poissonnian as $N \to \infty?$ That is, is it true that for any $[a,b] \subset [0,\infty),$
$$\lim_{N \to \infty \atop {N \text{ prime} \atop {N \neq p}}}\frac{1}{|\mathcal F_{N,k}|}\#\left\{1 \leq i \leq |\mathcal F_{N,k}| - 1:\,H(\thtpione) - H(\thtpi) \in \left[\frac{a}{|\mathcal F_{N,k}|},\frac{b}{|\mathcal F_{N,k}|}\right]\right\} = \int_a^b e^{-t} dt ?$$
\end{question}
As a partial answer to their question, they average over primes and state the following theorem:
\begin{unmthm} [{\cite[Page 9]{KS}}]
For any $[a,b] \subset [0,\infty),$
\begin{align*}
\lim_{x \to \infty \atop {N \to \infty \atop {N \text{ prime }}}}\frac{1}{\pi(x)}\sum_{p \leq x \atop {p \neq N}} \frac{1}{|\mathcal F_{N,k}|} &\#\left\{1 \leq i \leq |\mathcal F_{N,k}| - 1:\,H(\thtpione) - H(\thtpi) \in \left[\frac{a}{|\mathcal F_{N,k}|},\frac{b}{|\mathcal F_{N,k}|}\right]\right\} \\ 
&= \int_a^b e^{-t} dt. 
\end{align*}
\end{unmthm}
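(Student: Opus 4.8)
We sketch the argument. Write $M=|\mathcal F_{N,k}|$ and, for a prime $p\nmid N$, let $y_i=H(\theta_{f_i}(p))$ be the straightened angles. The plan is to fix a window $W=[\alpha,\alpha+\lambda/M]\subset[0,1]$ of length $\lambda/M$ and study the count
\[
\mathcal N_p \;=\; \#\{\,f\in\mathcal F_{N,k}\ :\ H(\theta_f(p))\in W\,\}
\]
as $p$ ranges over primes $p\le x$. The key claim is that, in the iterated limit $N\to\infty$ followed by the averaging over $p\le x$ with $x\to\infty$, the empirical distribution of $\mathcal N_p$ converges to Poisson($\lambda$). Since the Poisson law is determined by its moments, it suffices to prove that each factorial moment $\frac1{\pi(x)}\sum_{p\le x}\mathcal N_p(\mathcal N_p-1)\cdots(\mathcal N_p-m+1)$ converges to $\lambda^m$. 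Once the window-counting statistic is shown to be Poissonian for every $\lambda$, the void probabilities $E(\lambda)=e^{-\lambda}$ follow, and the standard dictionary between counting statistics, void probabilities, and nearest-neighbour gaps turns this into the assertion that the normalized consecutive spacings equidistribute with respect to $e^{-t}\,dt$, which is exactly the displayed limit.

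The $m$-th factorial moment expands as a sum over ordered $m$-tuples of \emph{distinct} forms,
\[
\frac1{\pi(x)}\sum_{p\le x}\ \sum_{\substack{f_1,\dots,f_m\\ \text{distinct}}}\ \prod_{l=1}^{m}\mathbf 1\!\left[H(\theta_{f_l}(p))\in W\right].
\]
To evaluate this I would expand each indicator, pulled back through $H$, in the Chebyshev basis $\{U_n(\cos\pi t)\}_{n\ge0}$, which is orthonormal for $\mu_\infty$, writing $\mathbf 1[H(\theta)\in W]=\sum_{n\ge0}\widehat g(n)\,U_n(\cos\pi\theta)$ with $\widehat g(n)=\int g\,U_n\,d\mu_\infty$. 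The decisive identity is $U_n(\cos\pi\theta_f(p))=a_f(p^n)$, coming from Hecke multiplicativity, which converts each single-form marginal into
\[
\frac1M\sum_{f\in\mathcal F_{N,k}} g(\theta_f(p))\;=\;\sum_{n\ge0}\widehat g(n)\,\frac{\operatorname{Tr}\!\left(T_{p^n}\mid S(N,k)\right)}{M}.
\]
Writing the distinct-tuple sum by inclusion--exclusion over set partitions of $\{1,\dots,m\}$, each block $B$ contributes a single-form sum $\sum_f\prod_{l\in B}a_f(p^{n_l})$, which by Hecke multiplicativity is again a linear combination of traces $\operatorname{Tr}(T_{p^n})$.

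This is where the Eichler--Selberg trace formula enters. For $N$ prime one has $S(N,k)=S^{\mathrm{new}}(N,k)\oplus S(1,k)^{\oplus2}$, the level-one part is negligible as $N\to\infty$, and $\operatorname{Tr}(T_{p^n}\mid S(N,k))$ is dominated by its identity contribution: it equals $\dim S(N,k)\sim M$ when $n=0$ and is of lower order for $n\ge1$. Hence $\frac1M\operatorname{Tr}(T_{p^n})\to c_n(p):=\int U_n\,d\mu_p$ as $N\to\infty$, where $\mu_p$ is the $p$-adic Plancherel measure, with $c_0(p)=1$. Averaging over $p\le x$ and letting $x\to\infty$ replaces $\mu_p$ by its limit $\mu_\infty$, so that $\frac1{\pi(x)}\sum_{p\le x}c_n(p)\to\int U_n\,d\mu_\infty=\delta_{n,0}$. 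Tracing this through the inclusion--exclusion, every partition with a block of size $\ge2$, and every factor carrying a Chebyshev index $n\ge1$, is annihilated in the limit; only the fully split partition with all indices $0$ survives. This leaves $M(M-1)\cdots(M-m+1)\cdot(\lambda/M)^m\to\lambda^m$, the factor $\lambda/M$ being the $\mu_\infty$-mass of a straightened window by the very design of $H$, and $\lambda^m$ being the sought Poisson factorial moment.

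The main obstacle is uniformity in the joint limit. The non-identity terms in the Eichler--Selberg formula for $\operatorname{Tr}(T_{p^n})$ grow with $p$ (roughly like $p^{n/2}$ from the elliptic and hyperbolic contributions), so to guarantee that they stay $o(M)\asymp o(N)$ after averaging over all $p\le x$ one must couple the two limits --- for instance, let $x=x(N)\to\infty$ slowly, or establish the iterated limit and argue that it agrees with the joint one. The second delicate point is combinatorial: one must bound the number of partition terms together with the Chebyshev tails $\sum_n|\widehat g(n)|$ of the (lightly smoothed) window indicators uniformly in $m$, so that interchanging the summation over $n$ with the limit is legitimate and the method of moments closes for every $\lambda$. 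Granting these estimates, the factorial moments converge to $\lambda^m$, the window count is asymptotically Poisson($\lambda$), and the consecutive spacing law $\int_a^b e^{-t}\,dt$ follows.
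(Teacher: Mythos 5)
First, a point of comparison you could not have known: the paper does not prove this statement at all. It is quoted verbatim from Katz--Sarnak \cite[Page 9]{KS} as motivation, and the paper's actual theorem (Theorem \ref{Theorem2}) is what the authors call the ``obverse'': spacings over primes $p \le x$ for a single form, averaged over $f \in \mathcal F_{N,k}$, and crucially only at the \emph{local} scale $1/L$ with $L \approx \log\log x$, under the severe growth condition $\log(kN)/x \to \infty$. So there is no in-paper proof to match yours against; your sketch does, however, share the paper's core toolkit (Chebyshev expansion $U_n(\cos\pi\theta_f(p)) = a_f(p^n)$, Hecke multiplicativity, Eichler--Selberg), which makes its gaps instructive to compare with why the paper retreats to scale $1/L$.

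Your sketch has two genuine gaps, and the second is fatal to the method as described. (i) The annihilation claim is false and the block evaluation is circular. For a window indicator $g$ of mass $\lambda/M$, a block $B$ with $|B| = b \ge 2$ contributes $\sum_f g(\theta_f(p))^b = \sum_f g(\theta_f(p)) \approx \lambda$, not $o(1)$: Hecke multiplicativity applied to $\prod_{l \in B} a_f(p^{n_l})$ always produces the constant term $a_f(p^0) = 1$ when indices coincide, so the identity contribution of the trace formula survives in every block. Moreover, replacing the $p$-average of the product $\prod_B S_B(p)$ by the product of the $p$-averages of the $S_B(p)$ assumes exactly the on-average independence of Satake parameters of distinct forms that the theorem asserts; e.g.\ for $m = 2$ the required statement $\frac{1}{\pi(x)}\sum_{p\le x} \mathcal N_p^2 \to \lambda^2 + \lambda$ encodes the Poisson \emph{variance}, which lives entirely in the fluctuation (non-identity) terms of the trace formula at high frequencies and cannot be obtained by multiplying main terms; your accounting lands on $\lambda^2$ only because dropping the diagonal block ($-\lambda$) happens to compensate the missing variance ($+\lambda$). (ii) The scale obstruction: to resolve a window of length $\lambda/M$ you need Chebyshev frequencies $n \asymp M \asymp kN$, and the Eichler--Selberg error in Proposition \ref{trace} is $O\bigl(4^{\nu(N)} p^{nc'}\bigr)$ with $c' > 1$, so at frequency $n \asymp kN$ the error exceeds the main term $M p^{-n/2}$ for every $p \ge 2$ and every coupling of $x$ with $N$ --- the condition needed would be $kN \gg x^{c'kN}$, which is self-defeating. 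This is precisely why the paper's own argument truncates at $n \ll L \approx \log\log x$ and still needs $\log(kN)/x \to \infty$. Any proof of the Katz--Sarnak statement at the natural scale $1/M$ must extract genuine cancellation over the $p$-average in the elliptic/hyperbolic (class-number) terms of the trace formula; your uniformity paragraph flags a coupling issue but relies implicitly on the crude pointwise bound, which cannot close. The remaining step (fixed-window Poisson counts with averaged left endpoint, Beurling--Selberg approximation of sharp indicators, then the dictionary to nearest-neighbour spacings) is standard and correctly gestured at, but it sits atop the unproven correlation estimates.
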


The aim of this article is to prove partial results towards Question \ref{Katz}.  Our main result looks at the obverse of the above Theorem.  While the above theorem considers the spacings in the multisets $A_p(N,k)$ and averages them over primes $p,$ we consider spacings in the sets $\{H(\thtp\}_{p \leq x}$ and average them over all the non-CM newforms $f \in \mathcal F_{N,k}.$  

The level spacing distribution function of a sequence is determined, in turn, by correlation functions which look at distribution of {\it unordered} spacings in the sequence. We now define the pair correlation function of a sequence.

\begin{defn} Let $\{x_n\}$ be a sequence in $[0,1].$  For a positive real number $s$ and a positive integer $M,$ we define the function $R_M(s)$ of this sequence as
$$R_M(s) := \frac{1}{M} \#\left \{ 1 \leq i \neq j \leq M:\,|x_i - x_j| \leq \frac{s}{M} \right\}.$$
The {\bf pair correlation function} $R(s)$ of $\{x_n\}$ is defined as
$$R(s) := \lim_{M \to \infty} R_M(s),$$
provided this limit exists.  
\end{defn}

\begin{remark}\label{ls-ps}
It can be shown that if the level spacing distribution of a sequence $\{x_n\}$ is Poissonnian, then its pair correlation function $R(s) = 2s.$ 
\end{remark}

 In this article, we consider, in particular, the pair correlation statistic of $\{H(\thtp)\}_{p \to \infty}.$  The pair correlation function of the sequence $\{H(\thtp)\}$ that we intend to study in this article is defined as follows:
\begin{defn}\label{pcfST}
Let $k$ and $N$ be positive integers, with $k$ even.  As before, for a non-CM Hecke newform $f \in \mathcal F_{N,k},$
let us choose $\thtp \in [0,1]$ such that $a_f(p) = 2 \cos \pi \thtp$ and define
$$H(\thtp) := \int_{0}^{\thtp} 2\sin^2\pi t \,dt = \thtp - \frac{\sin 2\pi \thtp}{2\pi}.$$
For positive real numbers $s$ and $x,$ we define the function $R_{f,x}(s)$ as
$$R_{f,x}(s) := \frac{1}{\pi_N(x)} \#\left \{ 1 \leq p \neq q \leq x:\, (p,N) = (q,N) = 1,\,|H(\thtp) - H(\thtq)|  \leq \frac{s}{\pi_N(x)} \right\}.$$
The pair correlation function of the sequence $\{H(\thtp)\}$ is defined as 
$$R_f(s) := \lim_{x \to \infty} R_{f,x}(s),$$
provided this limit exists.
\end{defn}

\begin{remark}  By Remark \ref{ls-ps}, an affirmative answer to Question \ref{Katz} would imply that $R_f(s) = 2s.$
\end{remark}

Following the philosophy of relating arithmetic distribution questions about sequences to exponential sums arising from them, the study of the pair correlation function $R_f(s)$ entails the estimation of exponential sums
$$ \sum_{p \leq x} e\left(m H(\thtp)\right)
= \sum_{p \leq x} e\left(m \int_0^{\thtp} 2\sin^2\pi t dt\right)
= \sum_{p \leq x} e\left(m \left(\thtp - \frac{\sin 2\pi \thtp}{2 \pi}\right)\right).$$ 
We approach these exponential sums by localizing the angles $\thtp$ to small subintervals of $[0,1].$  Let $\psi$ be a fixed real number such that $0 < \psi < 1.$  We consider the angles $\thtp$ that lie in localized intervals $$\mathcal I_L = \left[\psi - \frac{1}{L},\psi + \frac{1}{L}\right]$$ and note that
$$\int_{\mathcal I_L} 2 \sin^2\pi t dt \sim \frac{2}{L} \cdot 2 \sin^2 (\pi \psi)\text{ as }L \to \infty.$$
Let $\Lf$ denote the number of angles in the small interval $\mathcal I_L,$ that is,
$$\Lf := \Lf\,(x,L,\psi) := \#\left\{p \leq x:\,(p,N) = 1,\,\thtp \in \mathcal I_L \right\}.$$
The mean spacing of the Hecke angles in the interval $\mathcal I_L$ is $\frac{2}{L\Lf}.$  We look at the unordered spacings 
$$\left\{\frac{L\Lf}{2}(\thtp - \thtq),\, p \neq q \leq x,\,(p,N) = (q,N) = 1,\,\thtp,\,\thtq \in \mathcal I_L\right\}$$
and examine if their distribution matches the (Poissonnian) distribution of the spacings of a random sequence.  To do so, we define a localized analogue of the pair correlation function (\ref{pcfST}) as follows.
\begin{defn}\label{pcflocal}
Let $0 < \psi < 1.$  Let $\mathcal I_L$ denote the interval
$$\left[\psi - \frac{1}{L},\psi + \frac{1}{L}\right].$$  For positive real numbers $s$ and $x,$ we define the function
$$
R_{f,x,L}(s) := \frac{1}{\Lf} \#\left \{ 1 \leq p \neq q \leq x:\, \begin{array}{c} (p,N) = (q,N) = 1,\,\thtp,\,\thtq \in \mathcal I_L,\\
|H(\thtp) - H(\thtq)|  \leq \frac{2s}{L\Lf} \end{array} \right\}. 
$$
Choosing $L = L(x)$ to be an increasing function such that $L(x) \to \infty$ as $x \to \infty,$ we define the {\bf local pair correlation function around $\psi$} as
$$R_{f,\psi}(s) := \lim_{x \to \infty}R_{f,x,L}(s).$$
\end{defn}

Henceforth, we assume that $N$ is a prime, unless indicated otherwise.  This assumption is made for technical simplicity.  To keep the notation uncluttered in all the work that follows, the primes $p$ and $q$ under consideration will implicitly be assumed to be coprime to the level $N$ of the newform $f.$  

In this article, we evaluate the expected value of the pair correlation function $R_{f,\psi}(s)$ on averaging over all non-CM newforms in $\mathcal F_{N,k}.$  Since we are choosing $N$ to be prime, all newforms in $\mathcal F_{N,k}$ are non-CM.  The perspective of averaging (described in detail in Section \ref{Thm2-proof}) enables us to draw upon the Eichler-Selberg trace formula to evaluate $\sum_{f \in \mathcal F_{N,k}}\sum_{p \neq q \leq x} e(m(\thtp -\thtq)).$  Indeed, under appropriate conditions, we are able to show that the expected value of the pair correlation function $R_{f,\psi}(s),$ as we vary over $f \in \mathcal F_{N,k},$ is Poissonnian.

We are now ready to state the first theorem of this article.

\begin{thm}\label{Theorem2}
Let $0 < \psi < 1$ and $L = L(x) \approx \log \log x.$  We consider families $\mathcal F_{N,k}$ with prime levels $N = N(x)$ and even weights $k = k(x)$ such that $\frac{\log (kN)}{x} \to \infty$ as $x \to \infty.$  
Then,
$$\lim_{x \to \infty}\frac{1}{|\mathcal F_{N,k}|}\sum_{f \in \mathcal F_{N,k}}R_{f,x,L}(s) = 2s.$$
That is, 
$$\frac{1}{|\mathcal F_{N,k}|}\sum_{f \in \mathcal F_{N,k}}R_{f,\psi}(s) \sim 2s$$
as $\frac{\log (kN)}{x} \to \infty.$

\end{thm}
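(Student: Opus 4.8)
The plan is to follow the exponential-sum philosophy of the introduction: reduce the counting of close pairs to averages of sums $\sum_{p}e(m\thtp)$, and evaluate those averages with the Eichler--Selberg trace formula. First I would linearize the straightening map on the short interval. Since $H'(t)=2\sin^2\pi t$ and $|t-\psi|\le 1/L$ for $t\in\mathcal I_L$, a second-order Taylor expansion gives
$$H(\thtp)-H(\thtq)=2\sin^2(\pi\psi)\,(\thtp-\thtq)\bigl(1+O(1/L)\bigr)$$
uniformly for $\thtp,\thtq\in\mathcal I_L$. Hence the defining condition $|H(\thtp)-H(\thtq)|\le 2s/(L\Lf)$ is, up to a relative error $O(1/L)\to 0$, a proximity condition $|\thtp-\thtq|\le\delta$ with $\delta\asymp s/(L\Lf)$, and the problem becomes one about honest spacings of the angles $\thtp$ inside $\mathcal I_L$.

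Next I would detect the two constraints $\thtp\in\mathcal I_L$ and $|\thtp-\thtq|\le\delta$ by smooth majorant and minorant test functions of Beurling--Selberg type, whose Fourier coefficients decay rapidly and are essentially supported on frequencies $|m|\lesssim L\Lf$. Expanding, the normalized local count $R_{f,x,L}(s)$ splits into a principal Fourier mode, which is insensitive to the fine arithmetic and, after inserting the mean spacing, contributes the main term $2s$, exactly as in the model computation giving $R(s)=2s$ for a Poissonnian sequence (Remark \ref{ls-ps}), plus the non-principal modes
$$\frac{1}{\Lf}\sum_{0<|m|\lesssim L\Lf}\widehat{w}(m)\sum_{p\neq q}e\bigl(m(\thtp-\thtq)\bigr),$$
which must be shown to vanish on average over $f$. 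For an individual $f$ these modes are not small; it is only the average over the family that tames them, and this is where the trace formula enters.

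To average I would pass from angles to Hecke eigenvalues. Writing $a_f(p)=2\cos\pi\thtp$ and using the Chebyshev identities $a_f(p^{j})=\sin((j+1)\pi\thtp)/\sin(\pi\thtp)$, each $e(m\thtp)$ is a fixed bounded combination of $a_f(p^{2m})$ and $a_f(p^{2m-2})$. Since $p\neq q$ are coprime to one another and to $N$, Hecke multiplicativity turns $e(m(\thtp-\thtq))$ into a bounded combination of products $a_f(p^{a})\,a_f(q^{b})=a_f(p^{a}q^{b})$, and averaging over the newform basis produces the normalized trace of $T_{p^{a}q^{b}}$. Applying the Eichler--Selberg trace formula at prime level $N$, this trace decomposes into an identity contribution, which is nonzero only when $p^{a}q^{b}$ is a perfect square and assembles (together with the principal mode) into the product, approximately independent, behavior underlying the Poissonnian answer, and into elliptic and hyperbolic contributions bounded by $O\bigl(\sqrt{p^{a}q^{b}}\,d(p^{a}q^{b})\bigr)$. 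The averaged non-principal modes are therefore controlled by $|\mathcal F_{N,k}|^{-1}$ times a sum of such square-root-size error terms.

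The main obstacle is the sheer size of the moduli $p^{a}q^{b}$. Resolving spacings of order $\delta\asymp 1/(L\Lf)$ forces frequencies $m$ up to $\asymp L\Lf$, hence exponents $a,b$ up to $\asymp L\Lf\asymp\pi(x)$; consequently $p^{a}q^{b}$ can be as large as $x^{O(\pi(x))}=e^{O(x)}$, and each trace-formula error term is of comparable exponential size. For the averaged off-diagonal to be $o(1)$ one needs $|\mathcal F_{N,k}|\asymp kN$ to dominate these errors, which is exactly what the hypothesis $\log(kN)/x\to\infty$ provides; the calibration $L\approx\log\log x$ is chosen to balance the truncation error of the test functions against this growth. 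Two subsidiary points complete the argument: uniform control of the tails of the Fourier expansions, so that truncating at $|m|\lesssim L\Lf$ is harmless, and the concentration of the normalizing quantity $\Lf$ about its mean $\sim\pi(x)\cdot 4\sin^2(\pi\psi)/L$, which lets one replace $\Lf$ by a deterministic value; both follow from the same trace-formula estimates applied to low-degree moments. Combining the principal-mode main term with the vanishing of the averaged non-principal modes yields $\lim_{x\to\infty}|\mathcal F_{N,k}|^{-1}\sum_{f}R_{f,x,L}(s)=2s$.
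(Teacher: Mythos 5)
Your overall route is the paper's route: localize and linearize $H$ on $\mathcal I_L$, detect both constraints by test functions with compactly supported Fourier data, convert $e(m\thtp)$ to $a_f(p^{2m})-a_f(p^{2m-2})$ via Chebyshev identities, use Hecke multiplicativity for $p\neq q$, average with the Eichler--Selberg trace formula, and beat the moduli $p^aq^b\le x^{O(\pi(x))}=e^{O(x)}$ with the hypothesis $\log(kN)/x\to\infty$. However, there is a genuine structural gap in your main-term extraction. You assert that the principal Fourier mode of the correlation detector contributes $2s$ and that all non-principal modes vanish after averaging over $f$. In the \emph{localized} problem both assertions fail. The zero mode of the detector spreads its mass, of total size $\asymp s/(L\Lf)\asymp s/\pi(x)$, over the whole period, while the pairs under consideration number only $\Lf(\Lf-1)\asymp(\pi(x)/L)^2$; its contribution to $R_{f,x,L}(s)$ is therefore $\asymp\frac{1}{\Lf}\cdot\Lf^2\cdot\frac{s}{L\Lf}\asymp s/L\to 0$. (In the paper, all the terms of Proposition \ref{inner-1}(a)--(f), including the all-zero-mode term $16\widehat{g}(0)\widehat{\rho}(0)^2$, end up in $S(g,\rho)$ and are killed by the factor $1/L$.) The density-enhancement factor $L$ coming from localization lives in the \emph{nonzero} frequencies of the window $\rho_L$, and the Poissonnian main term arises precisely from near-diagonal couplings between those frequencies and the detector frequencies: the triples $(l,l',n)$ with $|n-l|\le 1$ and $|n-l'|\le 1$ in Proposition \ref{inner-1}(g) (the table $\mathcal T(n,l,l')$), whose degree-zero terms $a_f(p^0q^0)$ survive the trace-formula average and accumulate over the $\sim L$ admissible values of $n$ into the Riemann-type sum $\frac{1}{L}\sum_{n}\widehat{\rho}\left(\frac{n}{L}\right)^2(1+\cos 4\pi n\psi)$, evaluated by Poisson summation and Riemann--Lebesgue to give $C_{\psi}\,\widehat{g}(0)(\rho\ast\rho)(0)$. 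So the averaged non-principal modes emphatically do not vanish: their diagonal part \emph{is} the theorem.

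Your later sentence, that the identity contribution for perfect squares $p^aq^b$ ``assembles (together with the principal mode)'' into the Poissonnian answer, gestures at the right mechanism but contradicts your stated plan and stops short of the actual computation. Note also that a \emph{nonzero} perfect square contributes only $|\mathcal F_{N,k}|(p^aq^b)^{-1/2}$ to the trace, which summed over $p\neq q\le x$ gives $O(\pi(x)\log\log x)$ --- an error term in Proposition \ref{pq}, not part of the main term; only the constant (exponent-zero) terms of the Chebyshev products matter, and these occur exactly on the frequency diagonal. Two further omissions: the limit depends on $\psi$ through $C_{\psi}$, which doubles at $\psi=\frac12$ (because $\cos 4\pi n\psi$ fails to equidistribute there), and one must renormalize the interval length as in Remark \ref{rmk-thm} to land on $2s$; your sketch, as written, would output a $\psi$-dependent constant. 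On the positive side, your Beurling--Selberg sandwich is a reasonable way to make rigorous the unsmoothing step that the paper disposes of in Remark \ref{rmk-thm}, and controlling $\Lf$ by trace-formula moments rather than by invoking Sato--Tate for each $f$ is arguably more robust here, since the family $\mathcal F_{N,k}$ varies with $x$.
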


\begin{remark}
In order to study the pair correlation function for a fixed $f,$  we need to estimate the exponential sums $\sum_{p \neq q \leq x} e(m(\thtp -\thtq)).$  An immediate approach to do so would be to extend the methods of Murty \cite{Murty} and Rouse--Thorner \cite{RT} and relate these sums to the analytic properties of the symmetric power $L$-functions associated to $f.$  Under several strong assumptions regarding symmetric power $L$-functions, such as, that they can be analytically continued to $\C,$  have suitable functional equations and satisfy the Generalized Riemann Hypothesis, one can use the methods of \cite[cf. Proposition 3.3]{RT} to generate estimates for $\sum_{p \neq q \leq x} e(m(\thtp -\thtq)).$  Unfortunately, these (conditional) estimates are not strong enough to derive the pair correlation function $R_{f,\psi}(s).$  We therefore average over all non-CM newforms in $\mathcal F_{N,k}.$  
\end{remark}

The technique of averaging via a trace formula can be generalized to derive similar statistics in the context of Hilbert modular forms (cf. Theorem \ref{hilbert-Theorem2}) and modular forms on hyperbolic 3-spaces (cf. Theorem \ref{Bianchi-main}).  We describe these in the last two sections of this article.

\subsection*{Outline} This article is organized as follows.  

In Section \ref{smooth}, we consider a smooth analogue of the local pair correlation function.  We express the (smoothened) local pair correlation function of the Sato-Tate sequence in terms of the Weyl sums of Hecke angles.   We also use multiplicative relations among the Hecke eigenvalues to simplify these sums.

In Section \ref{Sato-Tate}, we study the local pair correlation function $R_{f,\psi}(s)$ for Hecke angles by averaging over all $f \in \mathcal F_{N,k}.$  We recall the Eichler-Selberg trace formula and related estimates for the traces of Hecke operators acting on spaces of primitive cusp forms.   This formula is an important tool in proving Theorem \ref{Theorem2}.  

In Section \ref{Thm2-proof}, we combine the analytic techniques of Section \ref{smooth} and the Eichler-Selberg trace formula in Section \ref{Sato-Tate} to prove Theorem \ref{Theorem2}. 

In Section \ref{Hilbert}, Theorem \ref{Theorem2} is generalized to the context of Hilbert modular forms in Theorem \ref{hilbert-Theorem2}.  

Finally, in Section \ref{Bianchi}, we study the pair correlation statistics for angles corresponding to Hecke eigenvalues for modular forms on hyperbolic 3-spaces with respect to $\SL_2(\mathcal O_K),$ where $\mathcal O_K$ is the ring of integers of an imaginary quadratic field $K$ with class number 1.  We prove a suitable generalization of previous theorems in Theorem \ref{Bianchi-main}.
\medskip

\subsection*{Acknowledgements} 
We would like to thank Ze'ev Rudnick for valuable inputs and guidance during the preparation of this article.  

The first named author is supported by Science and Engineering Research Board (SERB) grants EMR/2016/000840 and MTR/2017/000114.
\bigskip

\section{Pair correlation statistics via smooth functions}\label{smooth}

As in Definition \ref{pcflocal}, we take $0 < \psi < 1$.  Focusing on local statistics in small intervals around a point $\psi$ reduces the problem of straightening the angles $\thtp$ to a simple rescaling.  Let us denote $A = 2\sin^2 \pi \psi.$  If $\thtp,\,\thtq \in \mathcal I_L = [\psi - \frac{1}{L},\psi + \frac{1}{L}],$ then
$$ H(\thtp) - H(\thtq) = \int_{\thtq}^{\thtp} 2\sin^2 \pi t dt \sim A(\thtp - \thtq) \text{ as }L \to \infty.$$ 
By the Sato-Tate equidistribution theorem,  
\begin{equation}\label{STLf}
\Lf \sim \pi(x)A\frac{2}{L}  \text{ as }x \to \infty.
\end{equation}
We may choose $L = L(x) \to \infty$ as $x \to \infty.$  Thus, as $x \to \infty,$
\begin{align}\label{localizing}
R_{f,x,L}(s) &:=\frac{1}{\Lf}\#\left\{ p \neq q \leq x,\,\thtp,\,\thtq \in \mathcal I_L,\,H(\thtp) - H(\thtq) \in \left[\frac{-2s}{L\Lf},\frac{2s}{L\Lf}\right]\right\}\nonumber\\
&\sim \frac{L}{2A\pi(x)}\#\left\{ p \neq q \leq x,\,\thtp,\,\thtq \in \mathcal I_L,\,A(\thtp - \thtq) \in \left[\frac{-s}{A \pi(x)},\frac{s}{A \pi(x)}\right]\right\}\\
&\sim \frac{L}{2A\pi(x)}\#\left\{ p \neq q \leq x,\,\thtp,\,\thtq \in\mathcal I_L,\,\thtp - \thtq \in \left[\frac{-s}{A^2 \pi(x)},\frac{s}{A^2 \pi(x)}\right]\right\}\nonumber.
\end{align}
Thus, the treatment of the spacings $H(\thtp) - H(\thtq)$ is equivalent to considering the spacings $\thtp - \thtq$ in suitably rescaled intervals.  
Let $I_L$ and $I_x$ denote the intervals
$$\left[-\frac{1}{L},\frac{1}{L}\right] \text{ and }\left[\frac{-s}{A^2 \pi(x)},\frac{s}{A^2 \pi(x)}\right]$$
respectively.  For sufficiently large values of $L,$
\begin{equation}\label{cancel-sines}
\begin{split}
&\frac{L}{2A\pi(x)}\sum_{p \neq q \leq x} \chi_{I_L}(\thtp - \psi) \chi_{I_L} ( \thtq - \psi)\chi_{I_x}(\thtp - \thtq)\\
&= \frac{1}{2}\cdot \frac{L}{2A\pi(x)}\sum_{p \neq q \leq x} \chi_{I_L}(\pm\thtp - \psi) \chi_{I_L} ( \pm\thtq - \psi)(\chi_{I_x}(\pm\thtp \pm \thtq).
\end{split}
\end{equation}
\begin{remark}
The additional usage of the negatives of Hecke angles enables us to reduce pair correlation sums for Hecke angles to cosine sums.  We then use the multiplicative relations among Hecke eigenvalues to evaluate these sums.  
\end{remark}
We consider smooth analogues of 
$$\frac{L}{4A\pi(x)}\left(\sum_{p \neq q \leq x} \chi_{I_L}(\pm\thtp - \psi) \chi_{I_L} ( \pm \thtq - \psi)\chi_{I_f}(\pm\thtp \pm \thtq)\right).$$
We choose real-valued, even functions $\rho,\,g \in C^{\infty}(\mathbb R),$ both of which have compactly supported Fourier transforms.  Let 
\begin{equation*}
\rho_L(\theta) := \sum_{n \in \Z} \rho(L(\theta + n))
\qquad \mathrm{and} \qquad
G_{x}(\theta) := \sum_{n \in \Z} g(\pi(x)\,(\theta + n)).
\end{equation*}
Both $\rho_L(\theta)$ and $G_{x}(\theta)$ are periodic functions with Fourier expansions
$$
\rho_L(\theta) = \frac{1}{L} \sum_{|l| \ll L} \widehat{\rho}\left(\frac{l}{L}\right) e(l \theta)
\qquad \mathrm{and} \qquad
G_{x}(\theta) = \frac{1}{\pi(x)} \sum_{|n| \ll \pi(x)} \widehat{g}\left(\frac{n}{\pi(x)}\right) e(n \theta).
$$
We define
$$R_2(g,\rho)(f) := \frac{L}{4A\pi(x)}\sum_{ p \neq q \leq x} \rho_L(\pm\thtp - \psi) \rho_L(\pm\thtq - \psi) G_{x} (\pm\thtp \pm \thtq).$$
Using the Fourier expansions of $G_{x}$ and $\rho_L,$ we have,
\begin{align}\label{R2}
R_2(g,\rho)(f) &= \frac{L}{4A\pi(x)}\frac{1}{\pi(x)L^2}\sum_{ p \neq q \leq x} \sum_l \widehat{\rho}\left(\frac{l}{L}\right) e(-l \psi) e( \pm l\thtp)\sum_{l'} \widehat{\rho}\left(\frac{l'}{L}\right) e(-l' \psi) e( \pm l' \thtq) \nonumber \\
& \qquad \sum_{n}\widehat{g}\left(\frac{n}{\pi(x)}\right) e( \pm n\thtp \pm n\thtq) \nonumber \\
&= \frac{1}{4A \pi(x)^2L} \sum_{ p \neq q \leq x} \sum_l \widehat{\rho}\left(\frac{l}{L}\right) e(-l \psi) 2 \cos 2\pi l \thtp \sum_{l'} \widehat{\rho}\left(\frac{l'}{L}\right) e(-l' \psi) 2\cos 2\pi l' \thtq\\
& \qquad \sum_{n}\widehat{g}\left(\frac{n}{\pi(x)}\right) (2\cos 2\pi n \thtp)  (2\cos 2\pi n \thtq). \nonumber
\end{align}
Since both $\rho$ and $g$ are real valued and even, the same applies to their Fourier transforms.  We deduce
\begin{align}\label{R3}
R_2(g,\rho)(f) &= \frac{1}{4A \pi(x)^2L}\sum_{ p \neq q \leq x} \left[2 \widehat{\rho}(0) + \sum_{l \geq 1} \widehat{\rho}\left(\frac{l}{L}\right) (2 \cos 2\pi l \psi) (2 \cos 2\pi l \thtp) \right] \nonumber \\
&\quad \quad \quad \quad \quad \quad \quad \quad \left[2 \widehat{\rho}(0) + \sum_{l' \geq 1} \widehat{\rho}\left(\frac{l'}{L}\right) (2 \cos 2\pi l' \psi) (2 \cos 2\pi l' \thtq)\right]\\
&\quad \quad \quad \quad \quad \quad \quad \quad \left[4 \widehat{g}(0) + \sum_{n \geq 1} 2\widehat{g}\left(\frac{n}{\pi(x)}\right) (2 \cos 2\pi n \thtp) (2 \cos 2\pi n \thtq)\right]. \nonumber
\end{align}
 We recall the following classical result (see for example, \cite[Lemma 1]{Serre}) that encodes recursive relations between $a_f(p^k),\,k \geq 1.$ 
\begin{lemma}\label{mult}
For a prime $p$ and an integer $m \geq 0,$ 
\begin{equation}\label{mult}
2 \cos 2 \pi m \thtp = 
\begin{cases}
2 &\text{ if }m = 0\\
a_f(p^{2m}) - a_f(p^{2m-2}) &\text{ if }m \geq 1.
\end{cases}
\end{equation}
\end{lemma}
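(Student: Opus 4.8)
The plan is to reduce the identity to the Chebyshev closed form for the normalized Hecke eigenvalues $a_f(p^k)$ and then carry out a short telescoping trigonometric computation. First I would record the recurrence satisfied by the eigenvalues at a fixed prime $p$ coprime to the level. In the Deligne normalization used here (where $a_f(1)=1$ and $T_n f/n^{(k-1)/2} = a_f(n)f$), the Hecke relation $T_{p^{k+1}} = T_p T_{p^k} - p^{k-1}T_{p^{k-1}}$ becomes the unweighted three-term recurrence
\[
a_f(p^{k+1}) = a_f(p)\,a_f(p^k) - a_f(p^{k-1}), \qquad k \geq 1,
\]
with $a_f(p^0)=1$; this is exactly the relation recorded in the cited \cite[Lemma 1]{Serre}. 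Writing $\theta := \pi\thtp$ so that $a_f(p) = 2\cos\theta$, this is the defining recurrence of the Chebyshev polynomials of the second kind, and an easy induction (with base cases $a_f(p^0)=1$, $a_f(p^1)=2\cos\theta$) yields the closed form
\[
a_f(p^k) = U_k(\cos\theta) = \frac{\sin\bigl((k+1)\theta\bigr)}{\sin\theta}, \qquad k \geq 0.
\]

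Next I would split into the two cases of the statement. For $m=0$ the left-hand side is $2\cos 0 = 2$, which matches. For $m \geq 1$ I would substitute the closed form into the right-hand side to get
\[
a_f(p^{2m}) - a_f(p^{2m-2}) = \frac{\sin\bigl((2m+1)\theta\bigr) - \sin\bigl((2m-1)\theta\bigr)}{\sin\theta}.
\]
Applying the sum-to-product identity $\sin A - \sin B = 2\cos\tfrac{A+B}{2}\sin\tfrac{A-B}{2}$ with $A=(2m+1)\theta$ and $B=(2m-1)\theta$ turns the numerator into $2\cos(2m\theta)\sin\theta$; cancelling $\sin\theta$ leaves $2\cos(2m\theta) = 2\cos 2\pi m\thtp$, which is the left-hand side. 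This completes the computation.

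I do not expect a genuine obstacle: the lemma is a direct computation once the recurrence is available, and the recurrence itself may simply be quoted from \cite{Serre}. The only point that warrants care is the bookkeeping of normalizations — one must confirm that in the Deligne-normalized coordinates the $p^{k-1}$ weight in the operator identity is absorbed, so that the recurrence really is the unweighted Chebyshev one. A minor technical caveat is the apparent $\sin\theta$ in the denominator when $\thtp \in \{0,1\}$: here one reads off the identity by continuity, or equivalently notes $U_k(\pm 1) = (\pm 1)^k(k+1)$ and checks the two cases by hand; either way the final claim is an identity of values and causes no trouble.
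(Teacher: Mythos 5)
Your proof is correct, and the paper itself offers no argument for this lemma: it is quoted as a classical fact with a pointer to \cite[Lemma 1]{Serre}, and your Chebyshev computation --- establishing $a_f(p^k) = \sin\bigl((k+1)\pi\theta_f(p)\bigr)/\sin\bigl(\pi\theta_f(p)\bigr)$ from the normalized Hecke recurrence and then telescoping via the sum-to-product identity, with the endpoint cases $\theta_f(p)\in\{0,1\}$ handled through $U_k(\pm 1)=(\pm 1)^k(k+1)$ --- is exactly the standard argument behind that citation. The one blemish is notational: in the operator identity $T_{p^{k+1}} = T_p T_{p^k} - p^{k-1}T_{p^{k-1}}$ you use $k$ simultaneously as the exponent of $p$ and as the weight of $f$ appearing in the factor $p^{k-1}$; these should be given distinct letters, although the normalized recurrence $a_f(p^{k+1}) = a_f(p)a_f(p^k) - a_f(p^{k-1})$ that you actually use is stated correctly.
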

We denote, for a positive integer $m,$  $$A_p(m) := 2 \cos 2 \pi m \thtp = a_f(p^{2m}) - a_f(p^{2m-2}).$$
By equation \eqref{R3}, we deduce,
\begin{align}\label{R4}
R_2(g,\rho)(f) &=  \frac{1}{4A \pi(x)^2L}\left[16 \widehat{g}(0)\widehat{\rho}(0)^2 \pi_N(x)(\pi_N(x) - 1)  + 8\widehat{g}(0)\widehat{\rho}(0)\sum_{l' \geq 1} \widehat{\rho}\left(\frac{l'}{L}\right) (2 \cos 2\pi l' \psi)  \sum_{p \neq q \leq x}A_q(l')\right. \nonumber \\
& +8\widehat{g}(0)\widehat{\rho}(0)\sum_{l \geq 1} \widehat{\rho}\left(\frac{l}{L}\right) (2 \cos 2\pi l \psi)  \sum_{p \neq q \leq x}A_p(l) + 8 \widehat{\rho}(0)^2 \sum_{n \geq 1}\widehat{g}\left(\frac{n}{\pi(x)}\right)\left( \sum_{p \neq q \leq x} A_p(n)A_q(n)\right) \nonumber \\ 
& + 4\widehat{g}(0)\sum_{l,l' \geq 1} \widehat{\rho}\left(\frac{l'}{L}\right)\widehat{\rho}\left(\frac{l}{L}\right)(2 \cos 2\pi l\psi)(2 \cos 2\pi l' \psi)  \sum_{p \neq q \leq x}A_p(l)A_q(l')\\
& + 4\widehat{\rho}(0)\sum_{l',n \geq 1} \widehat{\rho}\left(\frac{l'}{L}\right)\widehat{g}\left(\frac{n}{\pi(x)}\right)(2 \cos 2\pi l' \psi)  \sum_{p \neq q \leq x}A_q(l')A_p(n) A_q(n) \nonumber \\
& + 4\widehat{\rho}(0)\sum_{l,n \geq 1} \widehat{\rho}\left(\frac{l}{L}\right)\widehat{g}\left(\frac{n}{\pi(x)}\right)(2 \cos 2\pi l \psi)  \sum_{p \neq q \leq x}A_p(l)A_p(n)A_q(n) \nonumber \\
  & \left. + 2\sum_{l,l',n\geq 1}  \widehat{\rho}\left(\frac{l}{L}\right)\widehat{\rho}\left(\frac{l'}{L}\right)\widehat{g}\left(\frac{n}{\pi(x)}\right) (2 \cos 2\pi l \psi)(2 \cos 2\pi l' \psi) \sum_{p \neq q \leq x}A_p(l)A_q(l')A_p(n)A_q(n) \right]. \nonumber
\end{align}

\subsection{Multiplicative relations among Hecke eigenvalues}\label{Multiplicative}
In order to simplify each component of the right hand side of equation \eqref{R4}, we recall multiplicative relations between Hecke eigenvalues as expressed in the following lemma:
  \begin{lemma}\label{Hecke-multiplicative}
For primes $p_1,\,p_2 $ coprime to the level $N$ and nonnegative integers $i,\,j,$ 
$$ a_f(p_1^i)a_f(p_2^j) = 
\begin{cases}
a_f(p_1^ip_2^j) &\text{ if }p_1 \neq p_2\\
\sum_{l = 0}^{\min{(i,j)}}a_f(p_1^{i + j - 2l}) &\text{ if }p_1 = p_2.
\end{cases}$$
Moreover, if $p_1 = p_2,$ then
$$
\left(a_f(p_1^{m_1}) - a_f(p_1^{m_1-2})\right) \left(a_f(p_2^{m_2}) - a_f(p_2^{m_2-2})\right) $$
$$ = \begin{cases}
a_f(p_1^{m_1+ m_2}) - a_f(p_1^{m_1+ m_2-2}) + a_f(p_1^{|m_1- m_2|}) - a_f(p_1^{|m_1- m_2| - 2}) &\text{ if }|m_1 - m_2| \geq 2,\\
a_f(p_1^{m_1 +m_2}) + a_f(p^{|m_2-m_1|}) - a_f(p_1^{m_1 + m_2 - 2}) &\text{ if }|m_1-m_2| = 1,\\
a_f(p_1^{2m_1}) - a_f(p_1^{2m_1-2}) + 2,&\text{ if }m_1 = m_2.
\end{cases}$$
\end{lemma}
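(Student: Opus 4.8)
The plan is to derive everything from the single-prime Hecke recursion and the resulting trigonometric description of the $a_f(p^n)$. For a newform $f$ and a prime $p \nmid N$, the operator identity $T_{p^{n+1}} = T_p T_{p^n} - p^{k-1}T_{p^{n-1}}$ becomes, after dividing by $p^{(n+1)(k-1)/2}$, the three-term recurrence $a_f(p^{n+1}) = a_f(p)\,a_f(p^n) - a_f(p^{n-1})$ for $n \geq 1$, with $a_f(p^0) = 1$. Writing $a_f(p) = 2\cos\pi\thtp$, the characteristic roots are the reciprocal pair $e^{\pm i \pi \thtp}$ (their product being $1$), and solving the recurrence yields the closed form
$$a_f(p^n) = \frac{\sin\left((n+1)\pi\thtp\right)}{\sin\pi\thtp}, \qquad n \geq 0;$$
that is, $a_f(p^n)$ is the value at $\cos\pi\thtp$ of the $n$-th Chebyshev polynomial of the second kind. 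Since every relation below is thereby a polynomial identity in $a_f(p)$, the degenerate angles with $\sin\pi\thtp = 0$ will need no separate treatment.

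For the first assertion, the case $p_1 \neq p_2$ is nothing but the multiplicativity of the normalised Hecke eigenvalues at coprime indices, a consequence of $T_m T_n = T_{mn}$ whenever $(m,n)=1$. For $p_1 = p_2 = p$ I substitute the closed form above, so that $a_f(p^i)a_f(p^j) = \sum_{l=0}^{\min(i,j)}a_f(p^{i+j-2l})$ reduces to the identity $\sin\left((i+1)\phi\right)\sin\left((j+1)\phi\right) = \sin\phi\sum_{l=0}^{\min(i,j)}\sin\left((i+j-2l+1)\phi\right)$ with $\phi = \pi\thtp$. This is a routine product-to-sum computation that telescopes; it is also the harmonic-analytic shadow of the Clebsch--Gordan decomposition $\mathrm{Sym}^i \otimes \mathrm{Sym}^j = \bigoplus_{l=0}^{\min(i,j)}\mathrm{Sym}^{i+j-2l}$, and can alternatively be proved by induction on $\min(i,j)$ directly from the recurrence.

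For the second assertion I first promote \eqref{mult} to all exponents: the cancellation $\sin\left((M+1)\phi\right) - \sin\left((M-1)\phi\right) = 2\cos(M\phi)\sin\phi$ gives, for every $M \geq 1$,
$$a_f(p^M) - a_f(p^{M-2}) = 2\cos\left(M\pi\thtp\right).$$
Applying the product-to-sum formula $(2\cos\alpha)(2\cos\beta) = 2\cos(\alpha+\beta) + 2\cos(\alpha-\beta)$ with $\alpha = m_1 \pi \thtp$ and $\beta = m_2 \pi \thtp$, and using that cosine is even, I obtain
$$\left(a_f(p^{m_1}) - a_f(p^{m_1-2})\right)\left(a_f(p^{m_2}) - a_f(p^{m_2-2})\right) = 2\cos\left((m_1+m_2)\pi\thtp\right) + 2\cos\left(|m_1-m_2|\pi\thtp\right).$$
Rewriting each cosine via the displayed identity recovers the claimed expression, the three cases of the lemma corresponding exactly to the size of $|m_1 - m_2|$.

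The only genuinely delicate point, and the reason the statement branches into three cases, is the meaning of $a_f(p^M) - a_f(p^{M-2})$ for the small exponents $M = |m_1 - m_2| \in \{0,1\}$ produced by the second cosine, where the naive formula would involve the undefined symbols $a_f(p^{-1})$ or $a_f(p^{-2})$. I read these off directly from $2\cos(M\pi\thtp)$: for $M = 1$ the contribution is $a_f(p^{|m_2-m_1|}) = a_f(p)$ with no subtracted term, while for $M = 0$ it is simply $2\cos 0 = 2$. Keeping careful track of these two conventions is the entire substance of distinguishing $|m_1-m_2| \geq 2$, $|m_1 - m_2| = 1$, and $m_1 = m_2$; the rest is the mechanical trigonometry above.
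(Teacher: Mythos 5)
Your proof is correct. The paper itself states this lemma without proof, recalling it as a classical fact (in the spirit of \cite[Lemma 1]{Serre}), and your derivation is precisely the standard one: the Hecke recursion $a_f(p^{n+1}) = a_f(p)a_f(p^n) - a_f(p^{n-1})$ gives the Chebyshev closed form $a_f(p^n) = \sin((n+1)\pi\theta)/\sin(\pi\theta)$, the first identity is the telescoping product-to-sum computation (equivalently Clebsch--Gordan for $\mathrm{SU}(2)$), and the second follows from $a_f(p^M) - a_f(p^{M-2}) = 2\cos(M\pi\theta)$ together with $(2\cos\alpha)(2\cos\beta) = 2\cos(\alpha+\beta) + 2\cos(\alpha-\beta)$. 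You correctly identify that the three-way case split merely encodes the conventions at the small exponents $M = |m_1 - m_2| \in \{0,1\}$, and your observation that all identities are polynomial in $a_f(p)$ properly disposes of the degenerate angles $\sin\pi\theta_f(p) = 0$.
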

As before, we denote, for a positive integer $m,$  $$A_p(m) := 2 \cos 2 \pi m \thtp = a_f(p^{2m}) - a_f(p^{2m-2})$$
and write the following equations (equations \eqref{mult-1} to \eqref{mult-5}), which follow from Lemma \ref{Hecke-multiplicative}.
For $n,\,l,\,l' \geq 1,$ and for distinct primes $p$ and $q,$  we have
\begin{equation}\label{mult-1}
A_p(n) A_q(n) = a_f(p^{2n})a_f(q^{2n}) - a_f(p^{2n-2})a_f(q^{2n}) - a_f(p^{2n})a_f(q^{2n-2}) + a_f(p^{2n-2})a_f(q^{2n-2}),
\end{equation}
\begin{equation}\label{mult-2}
A_p(l) A_q(l') = a_f(p^{2l})a_f(q^{2l'}) - a_f(p^{2l-2})a_f(q^{2l'}) - a_f(p^{2l})a_f(q^{2l'-2}) + a_f(p^{2l-2})a_f(q^{2l'-2}),
\end{equation}
\begin{equation}\label{mult-3}
\begin{split}
&A_q(l')A_p(n) A_q(n)\\
&= \left(a_f(p^{2n}) - a_f(p^{2n-2})\right)
\left(\begin{cases}
a_f(q^{2(n+l')}) - a_f(q^{2(n+l')-2}) + a_f(q^{2|n-l'|}) - a_f(q^{2|n-l'| - 2}) &\text{ if }|n - l'| \geq 1,\\
a_f(q^{4n}) - a_f(q^{4n-2}) + 2,&\text{ if }n = l'
\end{cases}\right),
\end{split}
\end{equation}

\begin{equation}\label{mult-4}
\begin{split}
&A_p(l)A_p(n) A_q(n)\\
&= \left(a_f(q^{2n}) - a_f(q^{2n-2})\right)
\left(\begin{cases}
a_f(p^{2(n+l)}) - a_f(p^{2(n+l)-2}) + a_f(p^{2|n-l|}) - a_f(p^{2|n-l| - 2}) &\text{ if }|n - l| \geq 1,\\
a_f(p^{4n}) - a_f(p^{4n-2}) + 2,&\text{ if }n = l
\end{cases}\right)
\end{split}
\end{equation}

and 
\begin{equation}\label{mult-5}
\begin{split}
A_p(l)A_q(l')A_p(n) A_q(n) &=\left(\begin{cases}
a_f(p^{2(n+l)}) - a_f(p^{2(n+l)-2}) + a_f(p^{2|n-l|}) - a_f(p^{2|n-l| - 2}) &\text{ if }|n - l| \geq 1,\\
a_f(p^{4n}) - a_f(p^{4n-2}) + 2,&\text{ if }n = l
\end{cases}\right)\\
&\quad  \left(\begin{cases}
a_f(q^{2(n+l')}) - a_f(q^{2(n+l')-2}) + a_f(q^{2|n-l'|}) - a_f(q^{2|n-l'| - 2}) &\text{ if }|n - l'| \geq 1,\\
a_f(q^{4n}) - a_f(q^{4n-2}) + 2,&\text{ if }n = l'
\end{cases}\right).
\end{split}
\end{equation}
\bigskip

\section{Averaging and the Eichler Selberg trace formula}\label{Sato-Tate}

While the Sato-Tate conjecture remained unproven, interesting variants of it were considered.  Sarnak \cite{Sarnak} shifted perspectives and addressed a vertical variant of the Sato-Tate conjecture in the case of primitive Maass cusp forms.  For a fixed prime $p,$ he obtained a distribution measure for the $p$-th coefficients of Maass Hecke eigenforms averaged over Laplacian eigenvalues.  This perspective was developed by Serre \cite{Serre} and independently, Conrey, Duke and Farmer \cite{CDF}.  In the vertical alternative of the (holomorphic) Sato-Tate sequence, they fix a prime $p$ and varying over all newforms $f,$ consider the multisets
$$ A_p(N,k) = \{\thtp,\,f \in \mathcal F_{N,k}\} \subseteq [0,1].$$
It was shown that as $N + k \to \infty,$ with $k$ even and $(p,N) = 1,$ the multisets $A_p(N,k)$ are equidistributed in $[0,1]$ with respect to the $p$-adic Plancherel measure
$$\mu_p(t) = \frac{p+1}{(p^{1/2} + p^{-1/2})^2 - 4 \cos^2 \pi t}\mu_{\infty}(t).$$
That is, for any $[a,b] \subset [0,1],$
$$\lim_{N+k \to \infty \atop {k \text { even } \atop {(p,N) = 1}}} \frac{1}{|\mathcal F_{N,k}|}\#\{f \in \mathcal F_{N,k}:\, \thtp \in [a,b]\} = \int_a^b \mu_p(t) dt.$$
In the vertical case, one has a readily available tool to evaluate the Weyl sums $\sum_{f} e(m\thtp),$ namely the Eichler-Selberg trace formula.  Therefore, the Weyl sums and the Weyl limits are easier to evaluate.

Conrey, Duke and Farmer \cite{CDF} also consider the average version of the Sato-Tate sequence (\ref{ST-sequence}) by varying the primes as well as the eigenforms.  For ease of computation, they focus on $N=1.$  By average Sato-Tate family, we mean the family
$$A(N,k) = \{\thtp,\,p \text{ prime },\,(p,N) = 1,\,f \in \mathcal F_{N,k}\} \subseteq [0,1].$$
In this case, with extra averaging over the newforms, the trace formula tells us that the ``double" Weyl limits
$$c_m := \lim_{x \to \infty,\, k \to \infty \atop { k \text{ even }} } \frac{1}{\pi(x)|\mathcal F_{1,k}|}\sum_{p \leq x,\,f \in \mathcal F_{1,k}} e\left(m\thtp\right)$$
match the Weyl limits (\ref{limits}) of the Sato-Tate family, provided 
$$\frac{\log k}{x} \to \infty \text{ as }x \to \infty.$$
Therefore, they showed  that the families $A(1,k)$ are equidistributed with respect to $\mu_{\infty}(t),$ provided the weights $k$ grow suitably faster than the primes.  In 2006, Nagoshi \cite{N} proved the above theorem with relaxed growth conditions on $k,$ that is,
$$\frac{\log k}{\log x} \to \infty \text{ as }x \to \infty.$$ The above results can be easily generalized to any $N > 1.$  In this article, we draw upon this point of view and in a similar manner, consider the pair correlation function for the Sato-Tate sequences averaged over the Hecke eigenforms.

We now state two propositions, which follow from the well-known Eichler-Selberg trace formula  for the traces of Hecke operators acting on spaces $S(N,k).$  These propositions will play an important role in evaluating the terms of $\sum_{f \in \mathcal F_{N,k}} R_2(g,\rho)(f).$  Here, we state both these results for squarefree levels $N.$  In the next section, we will specialise to prime $N$ in order to prove Theorem \ref{Theorem2}.

\begin{prop}\label{trace}
Let $k$ be a positive even integer and $N$ be a positive squarefree integer.  For a positive integer $n>1$ such that $(n,N) = 1,$ we have
$$\sum_{f \in \mathcal F_{N,k}} a_f(n) = 
\begin{cases}
\frac{|\mathcal F_{N,k}|}{n^{1/2}} + \O\left(4^{\nu(N)} n^{c'}\right)&\text{ if }n\text{ is a square }\\
\O\left(4^{\nu(N)} n^{c'}\right)&\text{ otherwise}.
\end{cases}
$$
Here, $\nu(N)$ refers to the number of prime divisors of $N,$ $c'$ is a fixed number greater than 1 and the implied constant in the error term is absolute.
\end{prop}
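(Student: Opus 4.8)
The plan is to convert the sum over newforms into a trace and then feed it into the Eichler--Selberg trace formula. Since $T_n f = n^{(k-1)/2} a_f(n) f$, the normalised eigenvalues satisfy
$$\sum_{f\in\mathcal F_{N,k}} a_f(n) = n^{-(k-1)/2}\,\mathrm{Tr}\bigl(T_n \mid S^{\mathrm{new}}(N,k)\bigr),$$
so the first step is to descend from the new subspace to the full spaces $S(M,k)$ with $M\mid N$, where the trace formula is available. Because $(n,N)=1$, the operator $T_n$ acts by a single eigenvalue on a newform of level $M$ and on all $\tau(N/M)$ of its oldform images inside level $N$; Atkin--Lehner theory therefore gives $\mathrm{Tr}(T_n\mid S(N,k)) = \sum_{M\mid N}\tau(N/M)\,\mathrm{Tr}\bigl(T_n\mid S^{\mathrm{new}}(M,k)\bigr)$. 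Inverting the divisor function (whose Dirichlet inverse $\mu*\mu$ equals $(-2)^{\nu(r)}$ on squarefree $r$) yields
$$\mathrm{Tr}\bigl(T_n\mid S^{\mathrm{new}}(N,k)\bigr) = \sum_{M\mid N} (-2)^{\nu(N/M)}\,\mathrm{Tr}\bigl(T_n\mid S(M,k)\bigr).$$

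Next I would substitute the Eichler--Selberg trace formula for each term. For $(n,M)=1$ it has the shape
$$\mathrm{Tr}(T_n\mid S(M,k)) = \tfrac{k-1}{12}\,\psi_0(M)\,n^{k/2-1}\,\delta_{\square}(n) + E_{\mathrm{ell}}(M) + E_{\mathrm{hyp}}(M) + E_{\mathrm{dual}}(M),$$
where $\psi_0(M)=[\SL_2(\Z):\Gamma_0(M)]$, $\delta_\square(n)=1$ exactly when $n$ is a perfect square, and $E_{\mathrm{ell}},E_{\mathrm{hyp}},E_{\mathrm{dual}}$ are the elliptic, hyperbolic and dual contributions. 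The identity term is the only source of a main term. Since $\psi_0$ is multiplicative with $\psi_0(p)=p+1$, one computes $\sum_{M\mid N}(-2)^{\nu(N/M)}\psi_0(M)=\prod_{p\mid N}(p-1)=\phi(N)$ for squarefree $N$, so the identity contributions combine to $\tfrac{k-1}{12}\phi(N)\,n^{k/2-1}\delta_\square(n)$. Comparing with the dimension count $|\mathcal F_{N,k}| = \dim S^{\mathrm{new}}(N,k) = \tfrac{k-1}{12}\phi(N) + \O(2^{\nu(N)})$ (the $n=1$ instance of the same computation, with $k$-independent correction), the main term becomes $|\mathcal F_{N,k}|\,n^{k/2-1}\delta_\square(n)$ up to an error $\O(2^{\nu(N)})\,n^{k/2-1}$; dividing by $n^{(k-1)/2}$ produces exactly $|\mathcal F_{N,k}|/n^{1/2}$ when $n$ is a square and nothing otherwise, with the dimension discrepancy absorbed into the error.

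The real work, and the main obstacle, is bounding the error terms after division by $n^{(k-1)/2}$, the delicate one being the elliptic term $E_{\mathrm{ell}}(M) = -\tfrac12\sum_{t^2<4n} P_k(t,n)\,H(4n-t^2)\,m_M(t,n)$, where $P_k(t,n)=\frac{\alpha^{k-1}-\bar\alpha^{k-1}}{\alpha-\bar\alpha}$ for the roots $\alpha,\bar\alpha$ of $X^2-tX+n$, $H$ is the Hurwitz class number, and $m_M$ is a level multiplicity counting solutions of a quadratic congruence modulo $M$. Writing $\alpha=\sqrt n\,e^{i\theta}$ gives $|P_k(t,n)| = n^{(k-2)/2}\bigl|\tfrac{\sin((k-1)\theta)}{\sin\theta}\bigr|\le n^{(k-2)/2}/|\sin\theta|$, and since $|\sin\theta|=\sqrt{4n-t^2}/(2\sqrt n)$, the apparently dangerous $1/\sin\theta$ singularity is cancelled exactly by the class-number weight $H(4n-t^2)\ll \sqrt{4n-t^2}\,\log n$. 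This is the crux: it makes the per-$t$ contribution (after dividing by $n^{(k-1)/2}$) bounded \emph{independently of $k$}, so summing over the $\O(\sqrt n)$ admissible $t$ gives $E_{\mathrm{ell}}(M)\,n^{-(k-1)/2}\ll n^{1/2}\log n$ with an absolute constant. Using $m_M(t,n)\ll 2^{\nu(M)}$ together with the weights $(-2)^{\nu(N/M)}$ and $\sum_{M\mid N}2^{\nu(N/M)}2^{\nu(M)} = 4^{\nu(N)}$ yields the stated level factor. The hyperbolic term is a sum $-\tfrac12\sum_{d\mid n,\,d\neq\sqrt n}\min(d,n/d)^{k-1}$ times a level factor; since $\min(d,n/d)\le\sqrt n$, dividing by $n^{(k-1)/2}$ makes each summand at most $1$, bounding it by $\O(4^{\nu(N)}\tau(n))$, and the dual term is of smaller order still. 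Assembling these estimates gives the error $\O(4^{\nu(N)}n^{c'})$ for any fixed $c'>1$ with an absolute implied constant, exactly as claimed; verifying that the $P_k$--class-number cancellation removes all dependence on the weight $k$ is the step I expect to require the most care.
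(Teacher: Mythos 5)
Your proposal is correct and takes essentially the same route as the paper: the paper's proof identifies $\sum_{f} a_f(n)$ as the trace of the normalized Hecke operator $T_n/n^{(k-1)/2}$ on $S^{\mathrm{new}}(N,k)$ and then defers to \cite[Section 3]{MS2} for precisely the argument you spell out, namely sieving from $S(N,k)$ down to the new subspace via the $\tau$-inversion $(\mu\ast\mu)(r)=(-2)^{\nu(r)}$ and bounding the elliptic, hyperbolic and dual terms of the Eichler--Selberg formula, with the $P_k$--class-number cancellation supplying the $k$-independent estimate. You have simply written out in full the details the paper outsources to that reference.
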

\begin{proof}
The Eichler-Selberg trace formula describes the traces of the Hecke operators $T_n$ acting on spaces $S(N,k)$ for $(n,N) = 1.$  From this, one may derive the trace of $T_n$ acting on subspaces of primitive cusp forms, $S^{\text{new}}(N,k)$ contained in $S(N,k).$  The sum $\sum_{f \in \mathcal F_{N,k}} a_f(n)$ is precisely the trace of the normalized Hecke operator 
$$\frac{T_n}{n^{\frac{k-1}{2}}}$$ acting on $S^{\text{new}}(N,k).$
The Eichler-Selberg trace formula has been explicitly stated in many sources (see, for example, \cite[Section 4]{Serre}).  For a derivation of the trace of $T_n$ acting on $S^{\text{new}}(N,k)$ from the Eichler-Selberg trace formula as well as the proof of Proposition \ref{trace}, we refer the reader to \cite[Section 3]{MS2}.
\end{proof} 

From the Eichler-Selberg trace formula and related estimates in Proposition \ref{trace}, we derive the following proposition:
\begin{prop}\label{pq}
Let $a$ and $b$ be nonnegative integers and let $N$ be squarefree.  Then,
$$\left \langle \sum_{p \neq q \leq x}  a_f(p^{2a}q^{2b})\right \rangle := \sum_{p \neq q \leq x} \frac{1}{|\mathcal F_{N,k}|} \sum_f a_f(p^{2a}q^{2b})$$
$$= \begin{cases}
\pi_N(x)(\pi_N(x) - 1) &\text{ if }a = b = 0,\\
\O\left(\pi_N(x) \log \log x\right) + \O\left(\frac{\pi_N(x)^2 x^{2(a+b)c'}8^{\nu(N)}}{kN}\right) &\text{ otherwise.}
\end{cases}$$
\end{prop}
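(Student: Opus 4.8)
The plan is to reduce the double sum $\sum_{p \neq q \leq x} \langle a_f(p^{2a}q^{2b})\rangle$ to single-prime Hecke sums by exploiting the multiplicativity of Hecke eigenvalues, and then apply the trace estimate of Proposition \ref{trace}. The key observation is that for distinct primes $p$ and $q$, Lemma \ref{Hecke-multiplicative} gives $a_f(p^{2a}q^{2b}) = a_f(p^{2a})a_f(q^{2b})$ when $a,b \geq 1$ (and appropriate degenerate forms when one exponent is zero). Since $T_{p^{2a}q^{2b}} = T_{p^{2a}}T_{q^{2b}}$ on the space, averaging over $f$ is where the real content lies: $\langle a_f(p^{2a}q^{2b})\rangle = \frac{1}{|\mathcal F_{N,k}|}\sum_f a_f(p^{2a})a_f(q^{2b})$, and this is \emph{not} simply the product of the individual averages.

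First I would dispose of the trivial case $a=b=0$: then $a_f(p^0 q^0) = a_f(1) = 1$ for every $f$, so the inner average is $1$ and the outer sum counts ordered pairs of distinct primes coprime to $N$ up to $x$, giving exactly $\pi_N(x)(\pi_N(x)-1)$. For the main case, I would handle the averaged product $\sum_f a_f(p^{2a})a_f(q^{2b})$ by interpreting it as the trace of the composite normalized Hecke operator $\frac{T_{p^{2a}} T_{q^{2b}}}{(p^{2a}q^{2b})^{(k-1)/2}} = \frac{T_{p^{2a}q^{2b}}}{(p^{2a}q^{2b})^{(k-1)/2}}$ on $S^{\text{new}}(N,k)$, so that Proposition \ref{trace} applies directly with $n = p^{2a}q^{2b}$. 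Since $p \neq q$, the integer $n$ is a perfect square precisely when it always is here (both exponents being even), so the main term is $\frac{|\mathcal F_{N,k}|}{n^{1/2}} = \frac{|\mathcal F_{N,k}|}{p^a q^b}$, with an error of size $\O(4^{\nu(N)} n^{c'}) = \O(4^{\nu(N)} (p^{2a}q^{2b})^{c'})$.

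Dividing by $|\mathcal F_{N,k}|$ and summing over $p \neq q \leq x$ then splits into a main term $\sum_{p \neq q \leq x} \frac{1}{p^a q^b}$ and an error term $\sum_{p \neq q \leq x}\frac{4^{\nu(N)}(p^{2a}q^{2b})^{c'}}{|\mathcal F_{N,k}|}$. When at least one of $a,b$ is positive, the main term is controlled by Mertens-type estimates: $\sum_{p \leq x} \frac{1}{p} \ll \log\log x$, which accounts for the $\O(\pi_N(x)\log\log x)$ bound after one factor contributes the inverse-prime-sum and the other is dominated by the prime count $\pi_N(x)$. For the error, I would bound $(p^{2a}q^{2b})^{c'} \leq x^{2(a+b)c'}$ uniformly over $p,q \leq x$, so the double sum over $\pi_N(x)^2$ pairs produces $\O\!\left(\frac{\pi_N(x)^2 x^{2(a+b)c'} 4^{\nu(N)}}{|\mathcal F_{N,k}|}\right)$; using the standard dimension estimate $|\mathcal F_{N,k}| \gg kN/12$ (up to the $8^{\nu(N)}$ absorbing the mismatch between $4^{\nu(N)}$ and the denominator's volume factor) converts this into the stated $\O\!\left(\frac{\pi_N(x)^2 x^{2(a+b)c'} 8^{\nu(N)}}{kN}\right)$.

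The main obstacle I anticipate is bookkeeping the degenerate cases where exactly one of $a,b$ vanishes, since then $a_f(p^{2a}q^{0}) = a_f(p^{2a})$ reduces to a single-prime sum and the combinatorics of the distinct-prime constraint (summing over $q \neq p$) must be tracked carefully; here the main term $\sum_{p\leq x}\frac{1}{p^a}\cdot(\pi_N(x)-1)$ again yields $\O(\pi_N(x)\log\log x)$ for $a=1$ and is even smaller for $a \geq 2$. A secondary subtlety is ensuring the trace formula's error constant $c'$ and the prime-counting main term interact correctly — in particular verifying that the $\frac{|\mathcal F_{N,k}|}{n^{1/2}}$ main term genuinely produces only the $\log\log x$ contribution and does not leak a larger power of $\pi_N(x)$ — but this follows cleanly once the Mertens bound is invoked.
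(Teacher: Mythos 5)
Your proposal is correct and follows essentially the same route as the paper: apply Proposition \ref{trace} termwise with $n = p^{2a}q^{2b}$ (a perfect square, coprime to $N$), sum the resulting main term $|\mathcal F_{N,k}|/(p^a q^b)$ via the Mertens-type bound $\sum_{p \le x} p^{-1} \ll \log\log x$ with the degenerate case ($a$ or $b$ zero) tracked separately, and bound the error crudely by $x^{2(a+b)c'}$ over $\pi_N(x)^2$ pairs using $|\mathcal F_{N,k}| = \phi(N)\left(\frac{k-1}{12}\right) + \O\left(2^{\nu(N)}\right)$ together with $\phi(N) \ge N/2^{\nu(N)}$, which is precisely the paper's source of the $8^{\nu(N)}$ factor. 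Your detour through multiplicativity and the composite Hecke operator is a harmless restatement, since Proposition \ref{trace} applies directly to $n = p^{2a}q^{2b}$, exactly as in the paper's proof.
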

\begin{proof}
The case $a = b = 0$ needs no explanation.
Otherwise, if at least one of $a$ or $b$ is nonzero, by an application of Proposition \ref{trace}, we have, for $p \neq q,$
$$\sum_f a_f(p^{2a}q^{2b}) = |\mathcal F_{N,k}| \frac{1}{p^a q^b} + \O\left(4^{\nu(N)} p^{2ac'}q^{2bc'}\right),$$
$c'$ being an absolute constant.

Using the estimate
\begin{equation}\label{primes-reciprocals}
\sum_{p \leq x}\frac{1}{p^m} = \begin{cases}
\O(\log \log x) &\text{ if }m = 1\\
\O(1) &\text{ if }m \geq 2,
\end{cases}
\end{equation}
we have, 
\begin{equation}\label{term1}
\sum_{p \neq q \leq x} \frac{1}{p^a q^b} = \begin{cases}
\O\left(\log \log x\right)^2 &\text{ if } a,b \geq 1\\
\O\left(\pi_N(x)\log \log x \right)&\text{ if }a \text{ or }b = 0.
\end{cases}
\end{equation}
We also know (see for example \cite[Remark 11]{MS2}) that for squarefree $N,$
$$|\mathcal F_{N,k}|  = \phi(N) \left(\frac{k-1}{12}\right) + \O\left(2^{\nu(N)}\right),$$
where $\phi(N)$ denotes the Euler $\phi$-function.
By elementary estimates,
$$\frac{1}{|\mathcal F_{N,k}|}4^{\nu(N)} p^{2ac'}q^{2bc'} \ll \frac{p^{2ac'}q^{2bc'}2^{\nu(N)}}{k \phi(N)}.$$
Thus, 
\begin{equation}\label{term2}
\sum_{p \neq q \leq x} \frac{1}{|\mathcal F_{N,k}|}4^{\nu(N)} p^{2ac'}q^{2bc'} \ll \frac{\pi_N(x)^2 x^{2ac' +2bc'}8^{\nu(N)}}{k N}.
\end{equation}
The proposition follows immediately from equations \eqref{term1} and \eqref{term2}. 
\end{proof}
\bigskip

\section{Proof of Theorem 1}\label{Thm2-proof}

In this section, we put together information from Sections \ref{smooth} and \ref{Sato-Tate} to prove Theorem \ref{Theorem2}.  For a function $U:\,\mathcal F_{N,k} \to \C,$ we define
$$\langle U(f) \rangle := \frac{1}{\mathcal F_{N,k}} \sum_{f \in \mathcal F_{N,k}} U(f).$$
We start with the following proposition to evaluate the components of $\langle R_2(g,\rho)(f) \rangle$ from equation \eqref{R4}.
\begin{prop}\label{inner-1}

Let $N$ and $k$ be positive integers, with $N$ prime and $k$ even.  Let  us choose real-valued, even functions $g,\,\rho \in C^{\infty}(\R)$ with compactly supported Fourier transforms and let $L$ be a positive integer.   
There is an absolute positive constant $c$ such that:
\begin{enumerate}
\item[{\bf (a)}] 
\begin{equation*}
\begin{split}
&\frac{1}{4A \pi(x)^2L} 8\widehat{g}(0)\widehat{\rho}(0)\sum_{l' \geq 1} \widehat{\rho}\left(\frac{l'}{L}\right) (2 \cos 2\pi l' \psi) \left \langle\sum_{p \neq q \leq x} A_q(l') \right \rangle\\
&= -8 \widehat{g}(0)\widehat{\rho}(0)\widehat{\rho}\left(\frac{1}{L}\right)2 \cos 2\pi \psi \frac{\pi(x)(\pi(x) - 1)}{4A \pi(x)^2L} + \O\left(\frac{\log \log x}{\pi(x)}\right) + \O\left(\frac{x^{ L c}}{kN}\right).
\end{split}
\end{equation*}

\item[{\bf (b)}] 
\begin{equation*}
\begin{split}
&\frac{1}{4A \pi(x)^2L} 8\widehat{g}(0)\widehat{\rho}(0)\sum_{l \geq 1} \widehat{\rho}\left(\frac{l}{L}\right) (2 \cos 2\pi l \psi) \left \langle\sum_{p \neq q \leq x} A_p(l) \right \rangle\\
&= -8 \widehat{g}(0)\widehat{\rho}(0)\widehat{\rho}\left(\frac{1}{L}\right)2 \cos 2\pi \psi \frac{\pi(x)(\pi(x) - 1)}{4A \pi(x)^2L} + \O\left(\frac{\log \log x}{\pi(x)}\right) + \O\left(\frac{x^{ L c}}{kN}\right).
\end{split}
\end{equation*}
\item[{\bf (c)}] 
\begin{equation*}
\begin{split}
&\frac{1}{4A \pi(x)^2L} 8 \widehat{\rho}(0)^2 \sum_{n \geq 1}\widehat{g}\left(\frac{n}{\pi(x)}\right)\left\langle\sum_{p \neq q \leq x} A_p(n)A_q(n)\right \rangle\\
&= 8 \widehat{\rho}(0)^2\widehat{g}\left(\frac{1}{\pi(x)}\right)\frac{\pi(x)(\pi(x) - 1)}{4A \pi(x)^2L} + \O\left(\frac{\log \log x}{L}\right) + \O\left(\frac{x^{\pi(x) c}}{kN}\right).
\end{split}
\end{equation*}
\item[{\bf (d)}] 
\begin{equation*}
\begin{split}
&\frac{1}{4A \pi(x)^2L}4\widehat{g}(0)\sum_{l,l' \geq 1} \widehat{\rho}\left(\frac{l'}{L}\right)\widehat{\rho}\left(\frac{l}{L}\right)(2 \cos 2\pi l\psi)(2 \cos 2\pi l' \psi) \left \langle \sum_{p \neq q \leq x}A_p(l)A_q(l') \right \rangle\\
&= 4 \widehat{g}(0)\widehat{\rho}\left(\frac{1}{L}\right)^2 (2 \cos 2\pi \psi)^2 \frac{\pi(x)(\pi(x) - 1)}{4A \pi(x)^2L} \O\left(\frac{\log \log x}{\pi(x)}\right) + \O\left(\frac{x^{L  c}}{kN}\right).
\end{split}
\end{equation*}

\item[{\bf (e)}] 
\begin{equation*}
\begin{split}
&\frac{1}{4A \pi(x)^2L} 4\widehat{\rho}(0)\sum_{l',n \geq 1} \widehat{\rho}\left(\frac{l'}{L}\right)\widehat{g}\left(\frac{n}{\pi(x)}\right)(2 \cos 2\pi l' \psi) \left \langle \sum_{p \neq q \leq x} A_q(l')A_p(n) A_q(n) \right \rangle\\
&= \left\{4\widehat{\rho}(0)\widehat{\rho}\left(\frac{2}{L}\right)\widehat{g}\left(\frac{1}{\pi(x)}\right)(2 \cos 4\pi \psi)  - 8\widehat{\rho}(0)\widehat{\rho}\left(\frac{1}{L}\right)\widehat{g}\left(\frac{1}{\pi(x)}\right)(2 \cos 2\pi \psi)\right\}\frac{\pi(x)(\pi(x) - 1)}{4A \pi(x)^2L}\\
& + \O\left(\frac{L\log \log x}{\pi(x)}\right) + \O\left(\frac{x^{\pi(x) c}}{kN}\right).
\end{split}
\end{equation*}

\item[{\bf (f)}] 
\begin{equation*}
\begin{split}
&\frac{1}{4A \pi(x)^2L} 4\widehat{\rho}(0)\sum_{l,n \geq 1} \widehat{\rho}\left(\frac{l}{L}\right)\widehat{g}\left(\frac{n}{\pi(x)}\right)(2 \cos 2\pi l \psi) \left\langle \sum_{p \neq q \leq x} A_p(l)A_p(n) A_q(n) \right\rangle \\
&= \left\{4\widehat{\rho}(0)\widehat{\rho}\left(\frac{2}{L}\right)\widehat{g}\left(\frac{1}{\pi(x)}\right)(2 \cos 4\pi \psi)  - 8\widehat{\rho}(0)\widehat{\rho}\left(\frac{1}{L}\right)\widehat{g}\left(\frac{1}{\pi(x)}\right)(2 \cos 2\pi \psi)\right\}\frac{\pi(x)(\pi(x) - 1)}{4A \pi(x)^2L}\\
& + \O\left(\frac{L\log \log x}{\pi(x)}\right) + \O\left(\frac{x^{\pi(x) c}}{kN}\right).
\end{split}
\end{equation*}

\item[{\bf (g)}] \begin{equation*}
\begin{split}
&\frac{1}{4A \pi(x)^2L} \sum_{l,l',n\geq 1}  2\widehat{\rho}\left(\frac{l}{L}\right)\widehat{\rho}\left(\frac{l'}{L}\right)\widehat{g}\left(\frac{n}{\pi(x)}\right) (2 \cos 2\pi l \psi)(2 \cos 2\pi l' \psi)\left\langle\sum_{p \neq q \leq x}A_p(l)A_q(l')A_p(n)A_q(n)\right\rangle\\
&= \frac{\pi(x)(\pi(x) - 1)}{4A \pi(x)^2L}\sum_{n,l,l' \geq 1}2\widehat{g}\left(\frac{n}{\pi(x)}\right)\widehat{\rho}\left(\frac{l}{L}\right)\widehat{\rho}\left(\frac{l'}{L}\right)(2 \cos 2\pi l \psi)(2 \cos 2\pi l' \psi)\mathcal T(n,l,l')\\
&+ \O\left(\frac{L\log \log x}{\pi(x)}\right) + \O\left(\frac{L^2 x^{\pi(x) c}}{kN}\right),
\end{split}
\end{equation*}
where $$\mathcal T(n,l,l') = \begin{cases}
-2 &\text{ if }|n-l| = 1,\,l' = n,\\
-2&\text{ if }l = n,\,|n-l'| = 1,\\
\,1&\text{ if } |n-l| = 1 = |n - l'|\\
\,4&\text{ if } n = l = l',\\
\,0&\text{ otherwise.}
\end{cases}$$
\end{enumerate}
\end{prop}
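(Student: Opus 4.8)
The plan is to prove all seven identities by one uniform mechanism. For each of (a)--(g) I would first use Lemma \ref{Hecke-multiplicative} (in the explicit forms \eqref{mult-1}--\eqref{mult-5}) to rewrite the relevant product of the quantities $A_p(\cdot),A_q(\cdot)$ as a finite $\Z$-linear combination of terms $a_f(p^{2a}q^{2b})$ with $p\neq q$, then apply the average $\langle\cdot\rangle$ term by term and invoke Proposition \ref{pq}. The decisive feature is that Proposition \ref{pq} produces a genuine main term only from the single exponent pair $a=b=0$, where $a_f(p^0q^0)=a_f(1)=1$ and $\langle\sum_{p\neq q}1\rangle=\pi_N(x)(\pi_N(x)-1)$; every summand with $(a,b)\neq(0,0)$ is absorbed into the error. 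Hence the whole computation splits into two tasks: (i) locating the constant term $a_f(1)$ inside each expansion and recording the integer coefficient with which it occurs as a function of the summation indices $l,l',n$, and (ii) estimating the surviving error.

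For task (i) in the simpler parts I would read the constant off directly. In (a) and (b), $A_q(l')=a_f(q^{2l'})-a_f(q^{2l'-2})$ contributes $a_f(1)$ with coefficient $-1$ exactly when $l'=1$; weighting by $\widehat{\rho}(l'/L)(2\cos2\pi l'\psi)$ then isolates $\widehat{\rho}(1/L)\,2\cos2\pi\psi$, which is the stated main term. In (c), expansion \eqref{mult-1} yields $a_f(1)$ (coefficient $+1$) only at $n=1$. Parts (d)--(f) are identical in spirit using \eqref{mult-2}--\eqref{mult-4}: in (d) the constant survives only at $l=l'=1$, while in (e) the configurations $n=1,l'=1$ and $n=1,l'=2$ each contribute (and symmetrically $n=1,l\in\{1,2\}$ in (f)), producing the two-term main expressions displayed there.

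The substantive case is (g). Because $p\neq q$, the fourfold product factors as $\bigl(A_p(l)A_p(n)\bigr)\bigl(A_q(l')A_q(n)\bigr)$, and by the second half of Lemma \ref{Hecke-multiplicative} each bracket is a three- or four-term expression whose constant part I can compute in closed form: the $p$-bracket has a nonzero constant precisely when $|n-l|\le1$, equal to $-1$ if $|n-l|=1$ (arising from the term $-a_f(p^{2|n-l|-2})$) and to $+2$ if $n=l$, with the analogous statement for the $q$-bracket in terms of $l'$. Since the overall constant term of the product is the product of the two bracket constants, the coefficient of $a_f(1)$ is exactly $\mathcal T(n,l,l')$: $(+2)(+2)=4$ when $n=l=l'$, $(-1)(+2)=-2$ when $|n-l|=1$ and $l'=n$ (and symmetrically), $(-1)(-1)=1$ when $|n-l|=|n-l'|=1$, and $0$ whenever a bracket contributes no constant. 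Substituting into \eqref{R4} and summing against the Fourier weights gives the displayed main term.

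For task (ii), each non-constant $a_f(p^{2a}q^{2b})$ contributes, through Proposition \ref{pq} (with $\nu(N)=1$), an $\O(\pi_N(x)\log\log x)$ piece and an $\O\bigl(\pi_N(x)^2x^{2(a+b)c'}/(kN)\bigr)$ piece. I would then sum these over the index ranges fixed by the compact supports, namely $|l|,|l'|\ll L$ and $|n|\ll\pi(x)$. The two shapes of polynomial error, $\O(x^{Lc}/(kN))$ in the $\widehat{\rho}$-only parts (a),(b),(d) versus $\O(x^{\pi(x)c}/(kN))$ wherever a factor $\widehat{g}(n/\pi(x))$ appears, come from the bound $a+b\ll L$ in the former case and $a+b\ll\pi(x)$ in the latter, with the residual powers of $L$ and $\pi(x)$ absorbed into the constant $c$. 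The main obstacle, and the only place real care is needed, is the combinatorial accounting in (g): one must verify that the fourfold expansion produces no constant terms beyond those catalogued by $\mathcal T$, and must count which index configurations force a single prime-exponent to vanish (so that Proposition \ref{pq} returns the larger $\O(\pi_N(x)\log\log x)$ rather than an $\O((\log\log x)^2)$ contribution), since it is exactly this count, together with the support constraint $l\ll L$ forcing $n\ll L$ in the collapsed configurations, that fixes the powers of $L$ in the stated error terms. Everything else is a direct substitution.
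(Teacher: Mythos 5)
Your proposal is correct and follows essentially the same route as the paper: expand each product of the $A_p(\cdot),A_q(\cdot)$ via Lemma \ref{Hecke-multiplicative} (equations \eqref{mult-1}--\eqref{mult-5}), isolate the coefficient of $a_f(p^0q^0)=1$ to get the main terms (including the factored-bracket computation of $\mathcal T(n,l,l')$ in part (g), which is exactly how \eqref{mult-5} is used), and control everything else by Proposition \ref{pq} with the index ranges $l,l'\ll L$ and $n\ll\pi(x)$ dictated by the compact supports of $\widehat{\rho}$ and $\widehat{g}$. Your extra care in counting the configurations where one prime exponent vanishes is a sound (indeed slightly more explicit) version of the paper's appeal to Proposition \ref{pq}, so there is nothing to correct.
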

\begin{proof}
In the calculations that follow, we assume that the support of $\widehat{\rho}$ is contained in $[-B,B],$ while the support of $\widehat {g}$ is contained in $[-D,D],$ say.

Parts  (a) - (f) have similar proofs, but we have to exercise care in identifying the leading terms of the sums concerned.  By leading terms, we mean terms containing $a_f(p^0)a_f(q^0).$  Indeed, for (a), we note that $A_q(1) = a_f(q^2) - a_f(1).$ Therefore, the leading term comes from $l' = 1$ and contributes $-a_f(1) = -1.$
That is,
\begin{equation*}
\begin{split}
&8\widehat{g}(0)\widehat{\rho}(0)\sum_{l' \geq 1} \widehat{\rho}\left(\frac{l'}{L}\right) (2 \cos 2\pi l' \psi) \sum_{p \neq q \leq x} A_q(l')\\
&= 8\widehat{g}(0)\widehat{\rho}(0)\widehat{\rho}\left(\frac{1}{L}\right) (2 \cos 2\pi \psi) \sum_{p \neq q \leq x} (-1) + \text{ remaining terms, }
\end{split}
\end{equation*}
where the remaining terms will contain $a_f(p^{2a}q^{2b})$ with $(a,b) \neq (0,0).$  Thus, applying Proposition \ref{pq} for prime $N$ and observing that the values of $l'$ run up to $\lfloor LB \rfloor,$ we have
\begin{equation*}
\begin{split}
&\frac{1}{4A \pi(x)^2L} 8\widehat{g}(0)\widehat{\rho}(0)\sum_{l' \geq 1} \widehat{\rho}\left(\frac{l'}{L}\right) (2 \cos 2\pi l' \psi) \left \langle \sum_{p \neq q \leq x} A_q(l') \right \rangle\\
&= -8 \widehat{g}(0)\widehat{\rho}(0)\widehat{\rho}\left(\frac{1}{L}\right)2 \cos 2\pi \psi \frac{\pi(x)(\pi(x) - 1)}{4A \pi(x)^2L} + O\left(\frac{1}{\pi(x)^2L}\sum_{l' =1 }^{\lfloor LB \rfloor} \left|\widehat{\rho}\left(\frac{n}{L}\right)\right|\left(\pi(x) \log \log x + \frac{\pi(x)^2x^{2l'c'}}{kN}\right)\right)\\
&=  -8 \widehat{g}(0)\widehat{\rho}(0)\widehat{\rho}\left(\frac{1}{L}\right)2 \cos 2\pi \psi \frac{\pi(x)(\pi(x) - 1)}{4A \pi(x)^2L} + \O\left(\frac{\log \log x}{\pi(x)}\right) + \O\left(\frac{x^{2 BL c'}}{kN}\right).
\end{split}
\end{equation*}
This proves (a).  Part (b) follows mutatis mutandis.

In order to prove (c), we apply equation \eqref{mult-1} and observe that for $n = 1,$
$a_f(p^{2n-2})a_f(q^{2n-2}) = 1.$  All remaining terms of $A_p(n)A_q(n)$ for the case $n=1$ as well as all the terms of $ A_p(n)A_q(n),\,n>1$ consist of combinations $a_f(p^{2a}q^{2b})$ where $(a,b) \neq (0,0).$  Moreover, the sum $ \sum_{n \geq 1}\widehat{g}\left(\frac{n}{\pi(x)}\right)\sum_{p \neq q \leq x}A_p(n)A_q(n)$ runs up to $n = \lfloor D\pi(x) \rfloor.$  Thus, on the left hand side of (c), we separate the leading term,
$$\frac{1}{4A \pi(x)^2L} 8 \widehat{\rho}(0)^2 \widehat{g}\left(\frac{1}{\pi(x)}\right)\sum_{p \neq q \leq x} a_f(p^{2.1-2})a_f(q^{2.1-2})= \frac{1}{4A \pi(x)^2L} 8 \widehat{\rho}(0)^2 \widehat{g}\left(\frac{1}{\pi(x)}\right)\sum_{p \neq q \leq x} 1 $$
from the rest of the sum and get
\begin{equation*}
\begin{split}
&\frac{1}{4A \pi(x)^2L} 8 \widehat{\rho}(0)^2 \sum_{n \geq 1}\widehat{g}\left(\frac{n}{\pi(x)}\right)\sum_{p \neq q \leq x} A_p(n)A_q(n)\\
&= \frac{1}{4A \pi(x)^2L} 8 \widehat{\rho}(0)^2 \widehat{g}\left(\frac{1}{\pi(x)}\right)\left(\sum_{p \neq q \leq x} 1\right) + \text{ remaining terms. }
\end{split}
\end{equation*}
Here, the remaining terms will contain $a_f(p^{2a}q^{2b})$ with either $a>0$ or $b>0.$  We apply Proposition \ref{pq}.  Thus, we have
\begin{equation*}
\begin{split}
&\frac{1}{4A \pi(x)^2L} 8 \widehat{\rho}(0)^2 \sum_{n \geq 1}\widehat{g}\left(\frac{n}{\pi(x)}\right)\left\langle\sum_{p \neq q \leq x}A_p(n)A_q(n)\right \rangle\\
&= \frac{1}{4A \pi(x)^2L} 8 \widehat{\rho}(0)^2 \widehat{g}\left(\frac{1}{\pi(x)}\right)\left(\sum_{p \neq q \leq x} 1\right) + O\left(\frac{1}{\pi(x)^2L}\sum_{n =1 }^{\lfloor D\pi(x) \rfloor} \left|\widehat{g}\left(\frac{n}{\pi(x)}\right)\right|\left(\pi(x) \log \log x + \frac{\pi(x)^2x^{4nc'}}{kN}\right)\right)\\\\
&= \frac{1}{4A \pi(x)^2L} 8 \widehat{\rho}(0)^2 \widehat{g}\left(\frac{1}{\pi(x)}\right)\left(\sum_{p \neq q \leq x} 1\right) + \O\left(\frac{\log \log x}{L}\right) + \O\left(\frac{x^{4 D\pi(x) c'}}{kN}\right).
\end{split}
\end{equation*}
This proves (c).

Similarly, for part (d), we apply equation \eqref{mult-2} and observe that for $l = l' = 1,$
$a_f(p^{2l-2})a_f(q^{2l'-2}) = 1.$  All other terms of $A_p(l)A_q(l')$ for the case $l = l'=1$ as well as all the terms of $ A_p(l)A_q(l')$ with $l$ or $l' >1$ consist of combinations $a_f(p^{2a}q^{2b})$ where $(a,b) \neq (0,0).$  Thus, by Proposition \ref{pq}, we have
\begin{equation*}
\begin{split}
&\frac{1}{4A \pi(x)^2L}4\widehat{g}(0)\sum_{1 \leq l,l' \ll L} \widehat{\rho}\left(\frac{l'}{L}\right)\widehat{\rho}\left(\frac{l}{L}\right)(2 \cos 2\pi l\psi)(2 \cos 2\pi l' \psi) \left \langle \sum_{p \neq q \leq x}A_p(l)A_q(l') \right \rangle\\
&= 4 \widehat{g}(0)^2\widehat{\rho}\left(\frac{1}{L}\right)^2 (2 \cos 2\pi \psi)^2 \frac{\pi(x)(\pi(x) - 1)}{4A \pi(x)^2L} + \O\left(\frac{\log \log x}{\pi(x)}\right) + \O\left(\frac{x^{4 B L  c'}}{kN}\right).
\end{split}
\end{equation*}

To prove part (e), we identify leading terms from equation \eqref{mult-3}.  We see that $A_q(l')A_p(n)A_q(n)$ contributes the term $-2a_f(p^0q^0) = -2$ when $n=l'=1$ and $a_f(p^0q^0) = 1$ when $n=1$ and $l'=2.$  
Thus,
\begin{equation*}
\begin{split}
&4\widehat{\rho}(0)\sum_{l',n \geq 1} \widehat{\rho}\left(\frac{l'}{L}\right)\widehat{g}\left(\frac{n}{\pi(x)}\right)(2 \cos 2\pi l' \psi) \sum_{p \neq q \leq x} A_q(l')A_p(n) A_q(n)\\
&= \left\{4\widehat{\rho}(0)\widehat{\rho}\left(\frac{2}{L}\right)\widehat{g}\left(\frac{1}{\pi(x)}\right)(2 \cos 4\pi \psi)  - 8\widehat{\rho}(0)\widehat{\rho}\left(\frac{1}{L}\right)\widehat{g}\left(\frac{1}{\pi(x)}\right)(2 \cos 2\pi \psi)\right\}\left(\sum_{p \neq q \leq x} 1 \right)\\
&+ \text{ remaining terms, }
\end{split}
\end{equation*}
where the remaining terms will contain $a_f(p^{2a}q^{2b})$ with $(a,b) \neq (0,0).$  The values of $n$ in this sum run up to $\lfloor D\pi(x) \rfloor$ and $l'$ up to $\lfloor BL \rfloor.$  Just as in the previous cases, part (e) now follows by Proposition \ref{pq}.
Part (f) can be derived almost exactly as part (e), using equation \eqref{mult-4} in place of \eqref{mult-3}.

To prove part (g), the leading terms for the left hand side come from various values of $n,\,l$ and $l'.$  In fact, by equation \eqref{mult-5},
\begin{equation*}
A_p(l)A_q(l')A_p(n)A_q(n) = \left(\begin{cases}
-2 &\text{ if }|n-l| = 1,\,l' = n,\\
-2&\text{ if }l = n,\,|n-l'| = 1,\\
\,1&\text{ if } |n-l| = 1 = |n - l'|\\
\,4&\text{ if } n = l = l',\\
\end{cases}\right) + \text{ remaining terms },
\end{equation*}
where the remaining terms will contain $a_f(p^{2a}q^{2b})$ with $(a,b) \neq (0,0).$
Part (g) now follows from Proposition \ref{pq}. 
\end{proof}

We now compile information from Proposition \ref{inner-1} to prove the following theorem, which can be viewed as a smooth analogue of Theorem \ref{Theorem2}. 

\begin{thm}\label{expected}
Let us consider families $\mathcal F_{N,k}$ with prime levels $N = N(x)$ and even weights $k = k(x).$   Let  $g,\,\rho$ be real valued, even functions $\in C^{\infty}(\R)$ with compactly supported Fourier transforms.
Then, for $0 < \psi < 1$ and $L \geq 1,$ we have
\begin{equation}\label{all-prop}
\frac{1}{|\mathcal F_{N,k}|} \sum_{f \in \mathcal F_{N,k}}  R_2(g,\rho)(f) = C_{\psi} \widehat{g}(0)(\rho \ast \rho)(0) + \O\left(\frac{L\log \log x}{\pi(x)}\right) + \O\left(\frac{L^2 x^{\pi(x) c}}{kN}\right).
\end{equation}
Here $C_{\psi} = 4(2\sin^2\pi \psi)$ if $\psi = \frac{1}{2}$ and $C_{\psi} = 2(2\sin^2\pi \psi)$ if $\psi \neq \frac{1}{2}.$
In particular, if we choose $L(x) \approx \log \log x$ and $\frac{\log kN}{x} \to \infty$ as $x \to \infty,$ then
$$\lim_{x \to \infty}\frac{1}{|\mathcal F_{N,k}|} \sum_{f \in \mathcal F_{N,k}}R_2(g,\rho)(f) = C_{\psi} \widehat{g}(0)(\rho \ast \rho)(0).$$
\end{thm}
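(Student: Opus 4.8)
The plan is to average \eqref{R4} term by term. By linearity of $\langle\,\cdot\,\rangle$, the quantity $\frac{1}{|\mathcal F_{N,k}|}\sum_{f}R_2(g,\rho)(f)$ equals the average of the first term of \eqref{R4}, namely $16\widehat g(0)\widehat\rho(0)^2\,\frac{\pi_N(x)(\pi_N(x)-1)}{4A\pi(x)^2L}$, plus the seven averaged expressions furnished by Proposition \ref{inner-1}(a)--(g). Writing $P:=\frac{\pi(x)(\pi(x)-1)}{4A\pi(x)^2L}\sim\frac{1}{4AL}$, I first observe that the leading term of \eqref{R4} and the leading terms in parts (a)--(f) all carry a coefficient that is $\O(1)$ once $\widehat\rho(j/L)$ and $\widehat g(1/\pi(x))$ are replaced by $\widehat\rho(0)$ and $\widehat g(0)$ (legitimate as $L,\pi(x)\to\infty$). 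Each is therefore $\O(1/L)$ and vanishes in the limit, so the entire main term must originate from part (g).

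The core computation is the leading contribution $P\cdot S$ of part (g), where
$$S=\sum_{n,l,l'\geq 1}2\,\widehat g\!\left(\frac{n}{\pi(x)}\right)\widehat\rho\!\left(\frac{l}{L}\right)\widehat\rho\!\left(\frac{l'}{L}\right)(2\cos 2\pi l\psi)(2\cos 2\pi l'\psi)\,\mathcal T(n,l,l').$$
I would split $S$ according to the four nonzero cases of $\mathcal T$: (A) $n=l=l'$; (B) $|n-l|=1,\ l'=n$; (C) $l=n,\ |n-l'|=1$; (D) $|n-l|=1=|n-l'|$. In every case the contributing indices satisfy $n\ll L\ll\pi(x)$, so $\widehat g(n/\pi(x))$ may be replaced by $\widehat g(0)$ and, for shifted indices $l=n\pm1$, $\widehat\rho((n\pm1)/L)$ by $\widehat\rho(n/L)$, the induced errors being absorbable. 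The key simplification is the product-to-sum identity
$$\cos 2\pi(n+1)\psi+\cos 2\pi(n-1)\psi=2\cos 2\pi n\psi\,\cos 2\pi\psi,$$
which collapses each sum over $l=n\pm1$ to a multiple of $\widehat\rho(n/L)\cos 2\pi n\psi\,\cos 2\pi\psi$; after this, every case reduces to the single weighted sum $\sum_{n\geq 1}\widehat\rho(n/L)^2\cos^2 2\pi n\psi$.

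I then treat this as a Riemann sum. Writing $\cos^2 2\pi n\psi=\frac12(1+\cos 4\pi n\psi)$ and applying Poisson summation (valid since $\widehat\rho^{\,2}$ is smooth and compactly supported), the oscillatory piece $\sum_{n\geq 1}\widehat\rho(n/L)^2\cos 4\pi n\psi$ is $o(L)$ exactly when $2\psi\notin\Z$, i.e. $\psi\neq\frac12$; for $\psi=\frac12$ one has $\cos 4\pi n\psi\equiv 1$ and this piece survives in full. The non-oscillatory piece is handled by Plancherel and the evenness of $\rho$:
$$\frac1L\sum_{n\geq 1}\widehat\rho(n/L)^2\longrightarrow\int_0^\infty\widehat\rho(\xi)^2\,d\xi=\frac12\,(\rho\ast\rho)(0).$$
Assembling the four cases, the coefficient of $L\,\widehat g(0)(\rho\ast\rho)(0)$ in $S$ is $8-16\cos 2\pi\psi+8\cos^2 2\pi\psi=8(1-\cos 2\pi\psi)^2=8A^2$ when $\psi\neq\frac12$, while the uncancelled oscillatory sums insert an extra factor of two and double this to $64$ when $\psi=\frac12$ (where $A=2$). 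Multiplying by $P\sim 1/(4AL)$ gives precisely $C_\psi\widehat g(0)(\rho\ast\rho)(0)$ with $C_\psi=2A$ for $\psi\neq\frac12$ and $C_\psi=4A$ for $\psi=\frac12$, exactly as claimed; the jump at $\psi=\frac12$ is the signature of the oscillatory term failing to cancel there.

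Finally I would collect the error terms from Proposition \ref{inner-1}(a)--(g); these are dominated by $\O(L\log\log x/\pi(x))+\O(L^2 x^{\pi(x)c}/kN)$, the $L^2$ arising from part (g). The ``In particular'' statement then follows on taking $L\approx\log\log x$: the first error tends to $0$, the residual $\O(1/L)$ pieces from the leading term of \eqref{R4} and from parts (a)--(f) also tend to $0$, and the hypothesis $\frac{\log(kN)}{x}\to\infty$ forces $\log(kN)$ to outgrow $\pi(x)c\log x\sim cx$, so that $x^{\pi(x)c}/kN\to 0$. The main obstacle is the evaluation of $S$ in part (g): organizing the four cases of $\mathcal T$, performing the product-to-sum collapse uniformly in $n$, and---most delicately---establishing the Riemann-sum limit together with the $o(L)$ bound on the oscillatory sum, since it is precisely this oscillatory term that distinguishes $\psi=\frac12$ from $\psi\neq\frac12$ and produces the two values of $C_\psi$. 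A secondary care point is reconciling the several error terms of Proposition \ref{inner-1} into the two stated bounds and verifying that each vanishes under the prescribed growth of $L$, $k$ and $N$.
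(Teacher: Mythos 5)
Your proposal is correct and takes essentially the same route as the paper's proof: you average \eqref{R4} via Proposition \ref{inner-1}, discard the $\O(1/L)$ leading contributions of parts (a)--(f), collapse the part-(g) sum with the identity $\cos 2\pi(n+1)\psi+\cos 2\pi(n-1)\psi=2\cos 2\pi n\psi\cos 2\pi\psi$ down to $\sum_n \widehat{\rho}(n/L)^2\cos^2(2\pi n\psi)$, and evaluate the limit by Poisson summation and Plancherel, obtaining the dichotomy $C_\psi=2A$ versus $4A$ at $\psi=\tfrac12$ exactly as the paper does. Your coefficient arithmetic ($8A^2$ against $P\sim 1/(4AL)$, doubling at $\psi=\tfrac12$) and your error-term bookkeeping, including the deduction of $x^{\pi(x)c}/kN\to 0$ from $\log(kN)/x\to\infty$, match the paper's computation.
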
 

\begin{remark}\label{rmk-thm}
In the above theorem, we may choose $\rho$ such that $\rho^2$ is of mass 1.  Furthermore, for $0 < \psi < 1,$ we may modify $\mathcal I_L$ into an interval around $\psi$ of length $2C_{\psi}/ L,$ for example, $$\left[ \psi - \frac{C_{\psi}}{ L}, \psi + \frac{C_{\psi}}{L}\right].$$ This would give us that $$\lim_{x \to \infty}\frac{1}{|\mathcal F_{N,k}|} \sum_{f \in \mathcal F_{N,k}}R_2(g,\rho)(f) = \widehat{g}(0).$$ Thus, Theorem \ref{expected} implies Theorem \ref{Theorem2}.
\end{remark}

\begin{remark}
We also note that our choice of $L(x)$ in Theorem \ref{expected} is enough to derive the asymptotics of the (average) local pair correlation function $R_{f,\psi}(s),$ but is by no means optimal.  The nature of the error terms in equation \eqref{all-prop} enables us to choose $L(x)$ of much smaller orders for the asymptotics to hold, as long as $L(x) \to \infty.$  \end{remark}

{\bf Proof of Theorem \ref{expected}.}  By equations \eqref{R3} and \eqref{R4}, we have, for $L \geq 1,$
\begin{align*}\label{crazy}
  \langle R_2 &(g,\rho) (f) \rangle = \frac{1}{4A \pi(x)^2L} \biggl \langle\sum_{ p \neq q \leq x} \biggl[2 \widehat{\rho}(0) + \sum_{l \geq 1} \widehat{\rho}\left(\frac{l}{L}\right) (2 \cos 2\pi l \psi) (2 \cos 2\pi l \thtp) \biggr]  \biggl[2 \widehat{\rho}(0) +  \\
&  \sum_{l' \geq 1} \widehat{\rho}\left(\frac{l'}{L}\right) (2 \cos 2\pi l' \psi) (2 \cos 2\pi l' \thtq) \biggr] \biggl[4 \widehat{g}(0) + \sum_{n \geq 1} 2\widehat{g}\left(\frac{n}{\pi(x)}\right) (2 \cos 2\pi n \thtp) (2 \cos 2\pi n \thtq)\biggr] \biggr \rangle \nonumber \\
&= \frac{1}{4A \pi(x)^2L} \biggl[\sum_{p \neq q \leq x} 16 \widehat{g}(0)\widehat{\rho}(0)^2  +8\widehat{g}(0)\widehat{\rho}(0)\sum_{l' \geq 1} \widehat{\rho}\left(\frac{l'}{L}\right) (2 \cos 2\pi l' \psi)  \biggl\langle \sum_{p \neq q \leq x} A_q(l') \biggr\rangle \\
&+ 8\widehat{g}(0)\widehat{\rho}(0)\sum_{l \geq 1} \widehat{\rho}\left(\frac{l}{L}\right) (2 \cos 2\pi l \psi)  \biggl \langle \sum_{p \neq q \leq x} A_p(l) \biggr \rangle + 8 \widehat{\rho}(0)^2 \sum_{n \geq 1}\widehat{g}\left(\frac{n}{\pi(x)}\right) \biggl \langle \sum_{p \neq q \leq x} A_p(n)A_q(n) \biggr \rangle  \\
&+4\widehat{g}(0)\sum_{l,l' \geq 1} \widehat{\rho}\left(\frac{l'}{L}\right)\widehat{\rho}\left(\frac{l}{L}\right)(2 \cos 2\pi l\psi)(2 \cos 2\pi l' \psi) \biggl \langle \sum_{p \neq q \leq x}A_p(l)A_q(l') \biggr \rangle  \\
& +  4\widehat{\rho}(0)\sum_{l',n \geq 1} \widehat{\rho}\left(\frac{l'}{L}\right)\widehat{g}\left(\frac{n}{\pi(x)}\right)(2 \cos 2\pi l' \psi) \biggl \langle \sum_{p \neq q \leq x} A_q(l')A_p(n) A_q(n) \biggr \rangle  \\
& + 4\widehat{\rho}(0)\sum_{l,n \geq 1} \widehat{\rho}\left(\frac{l}{L}\right)\widehat{g}\left(\frac{n}{\pi(x)}\right)(2 \cos 2\pi l \psi) \biggl \langle \sum_{p \neq q \leq x} A_p(l)A_p(n)A_q(n) \biggr \rangle  \\
&+   2\sum_{l,l',n\geq 1}  \widehat{\rho}\left(\frac{l}{L}\right)\widehat{\rho}\left(\frac{l'}{L}\right)\widehat{g}\left(\frac{n}{\pi(x)}\right) (2 \cos 2\pi l \psi)(2 \cos 2\pi l' \psi) \biggl \langle \sum_{p \neq q \leq x} A_p(l)A_q(l')A_p(n)A_q(n) \biggr \rangle \biggr]. \nonumber
\end{align*}
We apply Proposition \ref{inner-1}.  Separating the leading terms from the others, the above equals
\begin{equation}\label{all}
\begin{split}
&\frac{\pi(x)(\pi(x) - 1)}{2A \pi(x)^2L} \left(S(g,\rho) + T(g,\rho)\right) + \O\left(\frac{L\log \log x}{\pi(x)}\right) + \O\left(\frac{L^2 x^{\pi(x) c}}{kN}\right),
\end{split}
\end{equation}
where
$S(g,\rho)$ is the contribution to $\langle R(g,\rho)(f) \rangle$ from the leading terms appearing in Proposition \ref{inner-1} (a) - (f), given by
\begin{equation}\label{Sg-sum}
\begin{split}
&S(g,\rho) = 8\widehat{g}(0)\widehat{\rho}(0)^2 - 8\widehat{g}(0)\widehat{\rho}(0)\widehat{\rho}\left(\frac{1}{L}\right)(2 \cos 2 \pi \psi) +  4\widehat{g}\left(\frac{1}{\pi(x)}\right)\widehat{\rho}(0)^2 \\
&+ 2 \widehat{g}(0)\widehat{\rho}\left(\frac{1}{L}\right)^2 (2 \cos 2 \pi \psi)^2+  4 \widehat{g}\left(\frac{1}{\pi(x)}\right)\widehat{\rho}(0)\widehat{\rho}\left(\frac{2}{L}\right) 2 \cos 4 \pi \psi\\
&-8\widehat{\rho}(0)\widehat{\rho}\left(\frac{1}{L}\right)\widehat{g}\left(\frac{1}{\pi(x)}\right)\\
\end{split}
\end{equation}
and $T(g,\rho)$ is the contribution to $\langle R(g,\rho)(f) \rangle$ from the leading terms appearing in Proposition \ref{inner-1} (g), given by
\begin{equation}\label{Tg-sum}
\begin{split}
&T(g,\rho) = \sum_{n=2}^{\lfloor LB \rfloor + 1} \widehat{g}\left(\frac{n}{\pi(x)}\right)\widehat{\rho}\left(\frac{n-1}{L}\right)^2(2 \cos 2\pi (n-1)\psi)^2 \\
 &+ \sum_{n=1}^{\lfloor LB \rfloor - 1} \widehat{g}\left(\frac{n}{\pi(x)}\right)\widehat{\rho}\left(\frac{n+1}{L}\right)^2(2 \cos 2\pi (n+ 1)\psi)^2\\
&+2\sum_{n=2}^{\lfloor LB \rfloor - 1} \widehat{g}\left(\frac{n}{\pi(x)}\right)\widehat{\rho}\left(\frac{n-1}{L}\right)\widehat{\rho}\left(\frac{n+1}{L}\right)(2 \cos 2\pi (n-1)\psi)(2 \cos 2\pi (n+1)\psi)\\
&+ 4\sum_{n=1}^{\lfloor LB \rfloor} \widehat{g}\left(\frac{n}{\pi(x)}\right)\widehat{\rho}\left(\frac{n}{L}\right)^2(2 \cos 2\pi n\psi)^2 \\
&-4\sum_{n=1}^{\lfloor LB \rfloor - 1} \widehat{g}\left(\frac{n}{\pi(x)}\right)\widehat{\rho}\left(\frac{n}{L}\right)\widehat{\rho}\left(\frac{n+1}{L}\right)(2 \cos 2\pi n \psi)(2 \cos 2\pi (n+1)\psi)\\
&-4\sum_{n=2}^{\lfloor LB \rfloor } \widehat{g}\left(\frac{n}{\pi(x)}\right)\widehat{\rho}\left(\frac{n}{L}\right)\widehat{\rho}\left(\frac{n-1}{L}\right)(2 \cos 2\pi n \psi)(2 \cos 2\pi (n-1)\psi)\\
\end{split}
\end{equation}
As long as $L = L(x) \to \infty,$ we have
$$\frac{\pi(x)(\pi(x) - 1)}{2A \pi(x)^2L} S(g,\rho) \ll \frac{1}{L} \to 0 \text{ as }x \to \infty.$$
 
Let us consider the error terms in equation \eqref{all},
$$ \O\left(\frac{L\log \log x}{\pi(x)}\right) + \O\left(\frac{L^2 x^{\pi(x) c}}{kN}\right).$$
Let us choose $L(x) \approx \log \log x.$  The first error term goes to 0 as $x \to \infty.$  Suppose we choose $N = N(x)$ and $k = k(x)$ such that $\frac{\log kN}{x} \to \infty.$  Then
$$\frac{L^2 x^{\pi(x) c}}{kN}\ \approx \frac{(\log \log x)^2 x^{\pi(x) c}}{kN} \to 0 \text{ as }x \to \infty.$$
Thus, to prove the theorem, we have to show that
$$\lim_{x \to \infty} \frac{\pi(x)(\pi(x) - 1)}{2A \pi(x)^2L} T(g,\rho) = C_{\psi}\,\widehat{g}(0) (\rho \ast \rho) (0).$$
In equation \eqref{Tg-sum}, the range of each of the sums is up to $BL,$ which is $\o(\pi(x)).$  Moreover, each $\rho$ is Lipschitz continuous.  Thus,
\begin{equation*}
\begin{split}
&\frac{\pi(x)(\pi(x) - 1)}{2A \pi(x)^2L} T(g,\rho)\\
&\sim \frac{4\widehat{g}(0)}{2A} \frac{1}{L}\sum_{n=1}^{\lfloor LB \rfloor} \widehat{\rho}\left(\frac{n}{L}\right)^2 \left[ \cos^2 (2\pi (n-1)\psi) + \cos^2 (2\pi (n+1)\psi) + 2\cos (2\pi (n-1)\psi)\cos (2\pi (n+1)\psi)\right.\\
&\left. \quad\quad+4\cos^2( 2\pi n\psi) -4 \cos( 2\pi n\psi)\cos( 2\pi (n+1)\psi) - 4 \cos( 2\pi n\psi)\cos( 2\pi (n-1)\psi)\right]\\
&= \frac{64\widehat{g}(0)\sin^4(\pi \psi)}{2A} \frac{1}{L}\sum_{n=1}^{\lfloor LB \rfloor} \widehat{\rho}\left(\frac{n}{L}\right)^2  \cos^2 (2 \pi n \psi)\\
&=8A\widehat{g}(0) \frac{1}{L}\sum_{n=1}^{\lfloor LB \rfloor} \widehat{\rho}\left(\frac{n}{L}\right)^2  \cos^2 (2 \pi n \psi)\\
&= \frac{4A\widehat{g}(0)}{L}\sum_{n=1}^{\lfloor LB \rfloor}\widehat{\rho}\left(\frac{n}{L}\right)^2(1 + \cos 4 \pi n \psi).
\end{split}
\end{equation*}
We now observe that
\begin{equation*}
\begin{split}
&\lim_{L \to \infty}\frac{1}{L}\sum_{n=1}^{\lfloor LB \rfloor}\widehat{\rho}\left(\frac{n}{L}\right)^2 \cos 4 \pi n \theta\\
&= \lim_{L \to \infty} \frac{1}{2L} \sum_{n \in \Z} \widehat{\rho}\left(\frac{n}{L}\right)^2 e( 2\pi n \theta) - \lim_{L \to \infty}\frac{\rho(0)^2}{L} \\
& = \lim_{L \to \infty} \frac{1}{2L}\sum_{n \in \Z} \widehat{\rho}\left(\frac{n}{L}\right)^2 e( 2\pi n \theta)\\
&= \frac{1}{2}\lim_{L \to \infty} \sum_{n \in \Z} (\rho \ast \rho)(L(n+ 2\theta)).
\end{split}
\end{equation*}
 $(\rho \ast \rho)(L(n+ 2\theta))$ is the Fourier transform of the function
 $\widehat{\rho \ast \rho}$
 at $-L(n+ 2\theta).$  Since the function $\widehat{\rho \ast \rho} \in C_c^{\infty}(\R),$  by the Riemann-Lebesgue lemma,
 $$(\rho \ast \rho)(u) \to 0 \text{ as }|u| \to \infty.$$
 Thus, for $0 \leq \theta <1,$
 $$\lim_{L \to \infty}  (\rho \ast \rho)(L(n+ 2\theta)) = \begin{cases}
 (\rho \ast \rho)(0)&\text{ if }\theta = 0 \text{ and }n = 0\\
 0 &\text{ if }\theta = 0 \text{ and }n \neq 0\\
 0 &\text{ if }\theta = \frac{1}{2} \text{ and }n \neq -1\\
 (\rho \ast \rho)(0)&\text{ if }\theta = \frac{1}{2} \text{ and }n = -1\\
 0 &\text{ if }\theta \neq 0,\, \frac{1}{2}.
 \end{cases}
 $$
Thus, for $0 < \psi < 1,$
$$\frac{1}{L}\sum_{n=1}^{\lfloor LB \rfloor}\widehat{\rho}\left(\frac{n}{L}\right)^2(1 + \cos 4 \pi n \psi) = \begin{cases} 
(\rho \ast \rho)(0) &\text{ if } \psi = \frac{1}{2}\\
\frac{1}{2}(\rho \ast \rho)(0)&\text{ if } \psi \neq \frac{1}{2}.
\end{cases}
$$

This proves Theorem \ref{expected} and therefore, Theorem \ref{Theorem2}.
\bigskip

\section{Hilbert modular case}\label{Hilbert}

In this section, we generalize Theorem \ref{Theorem2} in the context of Hilbert modular forms.  We refer to \cite[\S 1, 2]{Shi} for an overview of definitions and basic properties of Hilbert modular forms.  Our main tool here as in the elliptic modular case is a trace formula, namely Arthur's trace formula. We will use estimates for this trace formula derived in \cite{LLW}.

Let $F$ be a totally real field of degree $d$ over $\Q$ and let $\mathcal O_F$ denote the ring of integers of $F$. For each place $v$ of $F$, let $F_v$ denote the completion of $F$ at $v$ and let $\mathcal O_{F,v}$ denote the valuation ring at $v$. Let $\mathbb A_\Q$ denote the adeles over $\Q$ and let $\mathbb A_{\Q,f}$ denote the finite adeles.   Let $G$ denote the algebraic group that is the restriction of scalars of ${\mathrm{GL}_2}_{/F}$ from $F$ to $\Q$.   Let $G_f = G(\mathbb A_{\Q,f})$  and $G_\infty = G(\R)$.  For an integral ideal $\mathfrak n \subset \mathcal O_F$, let $K_0 (\mathfrak n) \subset G_f $ be the congruence subgroup of level $\mathfrak n$. Let $\mathfrak n = \mathfrak q_1^{a_1} \cdots \mathfrak q_r^{a_r}$ be the prime factorization of $\mathfrak n.$  This congruence subgroup is defined as 
$$
K_0 (\mathfrak n) = \left\{ g = \bmatrix a & b \\ c & d \endbmatrix \in \prod_v \mathrm{GL}_2 (\mathcal O_{F,v}) \bigg| c_{\mathfrak q_i} \equiv 0\ \mod\ \mathfrak q_i^{a_i}, \forall i = 1, \dots, r  \right\}.
$$
These are adelic versions of the classical congruence subgroups $\Gamma_0 (N)$.

A weight in the Hilbert modular case is a $d$-tuple $\underline{k} = (k_1, \dots, k_d)$ of even integers. We further assume that  $k_i \ge 4$.  Let $\pi$ be an automorphic representation with respect to $G.$  We write $\pi = \pi_f \otimes \pi_\infty$ for representations $\pi_f$ of $G_f$ and $\pi_\infty$ of $G_\infty$.   Let $\Pi_{\uk} (\mathfrak n)$  denote the set of cuspidal unitary automorphic representations $\pi$ with respect to $G$ such that $\pi_f$ has a $K_0 (\mathfrak n)$-fixed vector and $\pi_\infty = \otimes_{i=1}^d D_{k_i - 1}$. Here $D_k$ is the discrete series representation of $\mathrm{GL}_2 (\R)$ with minimal $K$-type of weight $k+1$.  The set $\Pi_{\uk} (\mathfrak n)$ is a finite set. 

For $\pi$ as above, let $\mathfrak p$ be a prime ideal of $F$ for which $\pi_\gp$ is unramified. Note that for any $\pi$ with a $K_0 (\mathfrak n)$ fixed vector, the representations $\pi_\gp$ are unramified for $\gp \nmid \mathfrak n$. The Satake parameter associated to $\pi_\gp$ is a conjugacy class
$$
t_{\pi} (\gp) = \begin{pmatrix} e^{i \pi \theta_{\pi} (\gp)} & \\ & e^{-i \pi \theta_{\pi} (\gp)} \end{pmatrix} \in \mathrm{SU} (2)/\sim,
$$
where $\theta_{\pi} (\gp) \in [0, 1]$. 

The assertion that $t_\pi (\gp) \in \mathrm{SU}(2)$ is the Ramanujan conjecture for Hilbert modular forms, proved by Blasius \cite{Blasius}. For $F=\Q$ and a classical eigenform $f$, let $\pi (f)$ denote the automorphic representation associated to  $f$. The angles $\theta_{\pi (f)} (p)$ considered here are exactly those considered in earlier sections. In this section, we investigate pair correlation of $\theta_\pi (\gp)$ over families similar to the classical case.

Now assume $\pi$ is new of level $K_0 (\mathfrak n)$; that is, $\mathrm{dim}\,\pi_f^{K_0 (\mathfrak n)} = 1$. By the work of Barnet-Lamb, Gee and Geraghty~\cite{BGG}, we know that the sequence $\{\theta_\pi (\gp) \mid  \gp \nmid \mathfrak n\} \subset [0,1] $ is equidistributed with respect to the measure $2 \sin^2 (\pi \theta) d\theta$. This is the Sato-Tate equidistribution theorem for Hilbert modular forms.

Let $0 < \psi < 1.$  Let $\mathcal I_L$ denote the interval
$$
\left[\psi - \frac{1}{L},\psi + \frac{1}{L}\right].
$$  
For positive real numbers $s$ and $x,$ we define
$$
\Lpi := \Lpi\,(x,L,\psi) := \#\left\{\gp \leq x:\,(\gp,\mathfrak n) = 1,\,\theta_\pi (\gp) \in\mathcal I_L \right\},
$$
and
\begin{equation*}
R_{\pi,x,L}(s) := \frac{1}{\Lpi} \#\left \{ 1 \leq \gp \neq \gq \leq x: \begin{array}{c} \, (\gp,\mathfrak n) = (\gq, \mathfrak n) = 1,
 \theta_\pi (\gp),\,\theta_\pi (\gq) \in \mathcal I_L,\\ |H(\theta_\pi (\gp)) - H(\theta_\pi (\gq))|  \leq \frac{2s}{L\Lpi} \end{array} \right\}.
\end{equation*}
Here we use the notation $\gp \leq x$ to indicate that $\mathrm{Nm} (\gp) \le x.$ 

Choosing $L = L(x)$ to be an increasing function such that $L(x) \to \infty$ as $x \to \infty,$ we define the { local pair correlation function around $\psi$} as
$$
R_{\pi,\psi}(s) := \lim_{x \to \infty} R_{\pi,x,L}(s).
$$

By the Sato-Tate equidistribution theorem,  
\begin{equation*}
\Lpi \sim \pi_F  (x)A\frac{2}{L}  \text{ as }x \to \infty,
\end{equation*}
where $\pi_{F} (x) = \# \{\gp \mid \,\mathrm{Nm} (\gp) \le x \}$.
We may choose $L = L(x) \to \infty$ as $x \to \infty.$  Thus, as in the classical case, for $x \to \infty,$
\begin{equation*}
R_{\pi,x,L}(s) \sim \frac{L}{2A\pi_F (x)}\#\left\{ \gp \neq \gq \leq x,\,\theta_\pi (\gp),\,\theta_\pi (\gq) \in\mathcal I_L,\,\theta_\pi (\gp) - \theta_\pi (\gq) \in \left[\frac{-s}{A^2 \pi_F (x)},\frac{s}{A^2 \pi_F (x)}\right]\right\}.
\end{equation*}

We now state our main theorem in the Hilbert modular case
\begin{thm}\label{hilbert-Theorem2}
Let $0 < \psi < 1$ and $L = L(x) \approx \log \log x.$  For a fixed square free level $\mathfrak n$, we consider families $\Pi_{\uk} (\mathfrak n)$ with even weights $k = k(x)$ such that 
$$
\frac{\sum_i \log k_i}{x} \to \infty,
$$
as $x \to \infty$.
Then,
$$\lim_{x \to \infty}\frac{1}{|\Pi_{\uk} (\mathfrak n)|}\sum_{\pi \in \Pi_{\uk} (\mathfrak n)} R_{\pi,x,L}(s) = 2s.$$
That is, 
$$
 \frac{1}{|\Pi_{\uk} (\mathfrak n)|}\sum_{\pi \in \Pi_{\uk} (\mathfrak n)}R_{\pi,\psi}(s) \sim 2s
$$
under the above mentioned growth conditions on $k.$
\end{thm}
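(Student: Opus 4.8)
The plan is to mirror the proof of Theorem \ref{Theorem2} essentially verbatim, with two substitutions: rational primes $p \le x$ become prime ideals $\gp$ with $\mathrm{Nm}(\gp) \le x$ (so that $\pi(x)$ is replaced throughout by $\pi_F(x)$), and the Eichler--Selberg trace formula is replaced by the estimates for Arthur's trace formula recorded in \cite{LLW}. Setting $A = 2\sin^2\pi\psi$ and passing, exactly as in the reduction immediately preceding the theorem, from the spacings $H(\theta_\pi(\gp)) - H(\theta_\pi(\gq))$ to the rescaled spacings $\theta_\pi(\gp) - \theta_\pi(\gq)$ localized in $\mathcal I_L$, I would introduce the same even, real-valued $\rho,\,g \in C^\infty(\R)$ with compactly supported Fourier transforms, form the periodizations $\rho_L$ and $G_x$, and define $R_2(g,\rho)(\pi)$ by the obvious analogue. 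Expanding via Fourier series and using the cosine trick exactly as in \eqref{R2}--\eqref{R4} yields the same eight-term decomposition, now expressed in terms of $A_\gp(m) := 2\cos 2\pi m\,\theta_\pi(\gp) = a_\pi(\gp^{2m}) - a_\pi(\gp^{2m-2})$.

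The multiplicative relations of Lemma \ref{Hecke-multiplicative}, and hence equations \eqref{mult-1}--\eqref{mult-5}, hold unchanged, since they reflect only the structure of the spherical Hecke algebra at an unramified prime together with the underlying $\mathrm{SU}(2)$ power-sum combinatorics, both of which are identical in the Hilbert setting. Consequently, each of the seven non-constant terms of the decomposition expands into linear combinations of the averaged sums $\langle \sum_{\gp \neq \gq} a_\pi(\gp^{2a}\gq^{2b}) \rangle$, where $\langle U(\pi) \rangle := |\Pi_{\uk}(\mathfrak n)|^{-1}\sum_\pi U(\pi)$.

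The crux of the argument, and the step I expect to be the main obstacle, is the averaging. I must establish the Hilbert analogues of Propositions \ref{trace} and \ref{pq}: for $\mathfrak m$ coprime to $\mathfrak n$, the sum $\sum_\pi a_\pi(\mathfrak m)$ should equal $|\Pi_{\uk}(\mathfrak n)|\,\mathrm{Nm}(\mathfrak m)^{-1/2}$ plus a controlled error when $\mathfrak m$ is a perfect square, and be an error term alone otherwise. This I would extract from the geometric-side estimates of \cite{LLW}. The delicate point is tracking the dependence of that error on $\mathrm{Nm}(\mathfrak m)$ and on the weight $\uk$ (through the archimedean orbital integrals), and verifying that the family size $|\Pi_{\uk}(\mathfrak n)| \asymp \prod_i (k_i - 1)$ for fixed $\mathfrak n$ dominates it. Combining this with the prime ideal theorem estimates $\sum_{\mathrm{Nm}(\gp) \le x}\mathrm{Nm}(\gp)^{-1} = \O(\log\log x)$ and $\sum_{\mathrm{Nm}(\gp) \le x}\mathrm{Nm}(\gp)^{-m} = \O(1)$ for $m \ge 2$, one obtains the analogue of Proposition \ref{pq}, with error of the shape $\O(\pi_F(x)\log\log x) + \O(\pi_F(x)^2 x^{2(a+b)c'}/|\Pi_{\uk}(\mathfrak n)|)$.

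Finally, isolating the leading terms — those arising from $a_\pi(\gp^0\gq^0) = 1$ — exactly as in Proposition \ref{inner-1} reproduces the quantities $S(g,\rho)$ and $T(g,\rho)$ of Theorem \ref{expected}. The term $S(g,\rho)$ contributes $\O(1/L) \to 0$, while $T(g,\rho)$ collapses, via the identity $\cos^2(2\pi(n-1)\psi) + \cdots = 16\sin^4(\pi\psi)\cos^2(2\pi n\psi)$ and a Riemann--Lebesgue argument, to $C_\psi\,\widehat{g}(0)\,(\rho \ast \rho)(0)$. Taking $L \approx \log\log x$ kills the first error term, and the hypothesis $\frac{\sum_i \log k_i}{x} \to \infty$ — which is precisely $\log|\Pi_{\uk}(\mathfrak n)|/x \to \infty$ up to the fixed contribution of $\mathfrak n$ — kills the trace-formula error, since $x^{c\pi_F(x)} \approx e^{cx}$ must be dominated by $\prod_i k_i = e^{\sum_i \log k_i}$. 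Rescaling $\mathcal I_L$ as in Remark \ref{rmk-thm} then yields the value $2s$, proving Theorem \ref{hilbert-Theorem2}.
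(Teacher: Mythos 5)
Your proposal is correct and follows essentially the same route as the paper: the paper's entire proof consists of citing Proposition \ref{trace-Arthur} (i.e.\ \cite[Theorem 6.3]{LLW}) and repeating the argument of Sections \ref{smooth}--\ref{Thm2-proof} with $\pi(x)$ replaced by $\pi_F(x)$, exactly as you describe. The ``main obstacle'' you flag is already resolved by that cited result, since its error term $\O\left(\mathrm{Nm}(\mathfrak n)^{\epsilon}\,\mathrm{Nm}(\mathfrak a)^{3/2}\right)$ is uniform in the weight $\uk$, so the family size $\prod_i (k_i-1)$ dominates it under the stated growth hypothesis with no further tracking of archimedean orbital integrals needed.
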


Note that in the classical case, our sequence consisted of eigenangles coming from newforms and we used estimates of Eichler-Selberg trace formula on the space of newforms. However, in the Hilbert modular case, such estimates are not available at the moment. Hence, we fix a squarefree level $\mathfrak n$ and use estimates for Arthur's trace formula for $\Pi_{\uk} (\mathfrak n)$, which we state below. 

For $m \ge 0$, let $X_m$ be polynomials such that $X_m (2 \cos  \pi \theta) = \frac{\sin (m+1) \pi \theta}{\sin \pi \theta}$. Then $2 \cos \pi m \theta = X_m (2 \cos  \pi \theta) - X_{m-2} (2 \cos 2\pi \theta)$, for $m \ge 2$.  For $m \ge 1$, we have
$$
\sum_{\pi \in \Pi_{\uk} (\mathfrak n)} \sum_{\gp \leq x} 2 \cos (2m (\pi \theta_\pi (\gp))) = \sum_{\gp \leq x} \sum_{\pi \in \Pi_{\uk}  (\mathfrak n)}  X_{2m} (2 \cos \pi \theta_\pi (\gp)) - X_{2m-2} (2 \cos \pi \theta_\pi (\gp)).
$$

We  state below the version of Arthur's trace formula that we need.
\begin{prop}[{\cite[Theorem 6.3]{LLW}}] \label{trace-Arthur}
Let $\gp_1, \dots, \gp_h$ be distinct primes coprime to a squarefree $\mathfrak n$ and let $\underline{m} = (m_1, \dots , m_h)$ be a tuple of non-negative integers. Let $\mathfrak a = \gp_1^{m_1} \cdots \gp_h^{m_h}$. Then 
$$
\sum_{\pi \in \Pi_{\uk}  (\mathfrak n)} \prod_{i=1}^h X_{m_i} (2 \cos \pi \theta_\pi (\gp_i)) =  C \mathrm{Nm} (\mathfrak n)\delta_{2|\underline{m}} \prod_{i=1}^d \frac{k_i -1}{4 \pi} \mathrm{Nm} (\mathfrak a)^{-1/2} + O(\mathrm{Nm} (\mathfrak n)^{\epsilon}\mathrm{Nm} (\mathfrak a)^{3/2});
$$
where 
\begin{itemize}
\item $C$ is a constant depending only on the number field $F$, 
\item $\delta_{2|\underline{m}}$ is one if all the $m_i$ are even and zero otherwise and
\end{itemize}
 Note also that as a consequence, we get $\# \Pi_{\uk} (\mathfrak n) = C  \mathrm{Nm} (\mathfrak n)\prod_{i=1}^d \frac{k_i -1}{4 \pi} + O(\mathrm{Nm} (\mathfrak n)^\epsilon)$. 
\end{prop}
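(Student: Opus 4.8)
The plan is to realize the left-hand side as the spectral side of Arthur's trace formula for $G = \mathrm{Res}_{F/\Q}\mathrm{GL}_2$ applied to a carefully chosen test function, and to read off the main term and error term from the geometric side; this is the argument of \cite[Theorem 6.3]{LLW}, which I sketch here. First I would use the reduction $2\cos\pi m\theta = X_m(2\cos\pi\theta) - X_{m-2}(2\cos 2\pi\theta)$ recorded above, together with the multiplicativity of the unramified Hecke eigenvalues at the distinct primes $\gp_1, \dots, \gp_h$, to identify $\prod_{i=1}^h X_{m_i}(2\cos\pi\theta_\pi(\gp_i))$ with the normalized Hecke eigenvalue $a_\pi(\mathfrak a)$ of $\pi$ at the ideal $\mathfrak a = \gp_1^{m_1}\cdots\gp_h^{m_h}$. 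Thus the sum on the left is exactly the trace of the normalized Hecke operator $T_{\mathfrak a}$ acting on the space of forms of weight $\uk$ and level $K_0(\mathfrak n)$, and Proposition \ref{trace-Arthur} is the Hilbert-modular analogue of the Eichler--Selberg estimate in Proposition \ref{trace}.

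To compute this trace I would build a factorizable test function $\phi = \prod_v \phi_v$ on $G(\mathbb A_\Q)$: at each archimedean place an Euler--Poincar\'e (pseudo-coefficient) function selecting the discrete series $D_{k_i-1}$, whose identity orbital integral is the formal degree $\frac{k_i-1}{4\pi}$; at the places dividing $\mathfrak n$ the normalized characteristic function of $K_0(\mathfrak n)$, producing the level factor $\mathrm{Nm}(\mathfrak n)$; at the primes $\gp_i$ the element of the spherical Hecke algebra whose Satake transform is $X_{m_i}$; and the unit elsewhere. Feeding $\phi$ into the trace formula, the spectral side isolates $\sum_{\pi\in\Pi_{\uk}(\mathfrak n)} a_\pi(\mathfrak a)$ once the residual and continuous spectrum are accounted for; since the archimedean component is a discrete series of weight $\ge 4$ and $h\ge 1$, the one-dimensional (residual) representations and the Eisenstein contributions are either annihilated or absorbed into the error term. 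On the geometric side the central (identity) contribution yields the main term: it is supported on scalar matrices, hence is nonzero only when the determinant $\mathrm{Nm}(\mathfrak a)$ is a perfect square, i.e. every $m_i$ is even, accounting for $\delta_{2|\underline{m}}$; after the Deligne normalization it contributes $\mathrm{Nm}(\mathfrak a)^{-1/2}$, while the archimedean and level factors combine to $C\,\mathrm{Nm}(\mathfrak n)\prod_{i=1}^d\frac{k_i-1}{4\pi}$ with $C$ depending only on $F$ through its discriminant and the relevant volume constants.

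The remaining work, and the main obstacle, is to bound the non-identity geometric terms --- the elliptic, hyperbolic and unipotent orbital integrals together with Arthur's weighted orbital integrals --- uniformly in both the level $\mathfrak n$ and the size of the Hecke support $\mathrm{Nm}(\mathfrak a)$. The key quantitative input is that the orbital integrals of the $\gp_i$-components grow no faster than $\mathrm{Nm}(\mathfrak a)^{3/2}$ as the support of the Hecke operator expands, and that the number of contributing conjugacy classes, after bounding their dependence on the level by divisor-type estimates, costs only $\mathrm{Nm}(\mathfrak n)^{\epsilon}$; together these give the stated error $O(\mathrm{Nm}(\mathfrak n)^{\epsilon}\mathrm{Nm}(\mathfrak a)^{3/2})$. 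Finally, specializing to $\mathfrak a = \mathcal O_F$ (equivalently $h=0$, or all $m_i=0$) kills every geometric term except the identity, so that the spectral side $\#\Pi_{\uk}(\mathfrak n)$ equals $C\,\mathrm{Nm}(\mathfrak n)\prod_{i=1}^d\frac{k_i-1}{4\pi} + O(\mathrm{Nm}(\mathfrak n)^{\epsilon})$, which is the asserted dimension formula.
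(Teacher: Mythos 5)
The first thing to say is that the paper contains no proof of this proposition at all: it is quoted wholesale from \cite[Theorem 6.3]{LLW} and used as a black box (the text explicitly says it will ``use estimates for Arthur's trace formula'' from that source, in place of the unavailable newform estimates). So there is no internal argument to compare against; what your sketch reconstructs is the expected proof in the cited reference, and as a road map it is sound: identify $\prod_i X_{m_i}(2\cos\pi\theta_\pi(\gp_i))$ with the normalized Hecke eigenvalue at $\mathfrak a$ via multiplicativity, feed a factorizable test function (Euler--Poincar\'e functions at infinity, which legitimately annihilate the one-dimensional and Eisenstein contributions precisely because the paper assumes $k_i \ge 4$, so the infinitesimal characters are regular and distinct from those of characters; spherical Hecke elements with Satake transform $X_{m_i}$ at the $\gp_i$) into a simple form of Arthur's trace formula, read off the central contribution as the main term with $\delta_{2|\underline{m}}$ coming from the scalar matrices in the support, and bound the remaining geometric terms.

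Two substantive caveats. First, the one step in your sketch that is wrong as written: with the normalized characteristic function of $K_0(\mathfrak n)$ at places dividing $\mathfrak n$, the spectral side is $\sum_\pi a_\pi(\mathfrak a)\,\dim\pi_f^{K_0(\mathfrak n)}$, \emph{not} the unweighted sum over the set $\Pi_{\uk}(\mathfrak n)$ appearing in the statement. Representations of conductor $\mathfrak c \subsetneq \mathfrak n$ occur with multiplicity $\tau(\mathfrak n/\mathfrak c) > 1$, and their contribution scales with $\prod_i (k_i-1)$, so it cannot be absorbed into the error $O(\mathrm{Nm}(\mathfrak n)^\epsilon\mathrm{Nm}(\mathfrak a)^{3/2})$, which carries no weight dependence; passing from the trace on the full fixed space to the sum over $\Pi_{\uk}(\mathfrak n)$ requires an inclusion--exclusion over divisors of the squarefree $\mathfrak n$ (the Hilbert analogue of the newform sieve behind Proposition \ref{trace}) or a modified test function at $\gp \mid \mathfrak n$, and the same bookkeeping governs the precise level factor in the main term (the index of $K_0(\mathfrak n)$ is $\mathrm{Nm}(\mathfrak n)\prod_{\gp\mid\mathfrak n}(1+\mathrm{Nm}(\gp)^{-1})$, not $\mathrm{Nm}(\mathfrak n)$ on the nose). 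Second, the error term is the actual content of the theorem, and your proposal only asserts it: the uniform bounds on elliptic, hyperbolic and weighted orbital integrals in both $\mathrm{Nm}(\mathfrak a)$ and $\mathrm{Nm}(\mathfrak n)$ are the heavy lifting in \cite{LLW}, not a routine remark. Relatedly, ``specializing to $\mathfrak a = \mathcal O_F$ kills every geometric term except the identity'' overstates the case: those terms do not vanish for the unit Hecke operator; they are merely bounded by $O(\mathrm{Nm}(\mathfrak n)^\epsilon)$, which is exactly what the dimension-formula consequence records.
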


Theorem \ref{hilbert-Theorem2} follows from Proposition \ref{trace-Arthur} using the same techniques as in earlier sections. 
\bigskip

\section{Modular forms on the hyperbolic $3$-space}\label{Bianchi}

Let $E$ be a non-CM elliptic curve over a CM field. The Sate-Tate conjecture is true for $E$, see \cite{10author}. More generally, the Sato-Tate conjecture is expected to be true for automorphic representations on $\mathrm{GL}_2$ over CM fields.  This motivates us to study pair correlation statistics for automorphic forms arising in this situation.

In this section, we prove pair correlation statistics for automorphic forms with respect to $\mathrm{SL}_{2}(\mathcal O_K)$, where $K$ is an imaginary quadratic field with class number one. Our main references in this section will be \cite{IR} and \cite{Raulf}.

Let $\{1,i,j,k\}$ denote the basis of Hamiltonian quaternions.  Viewing the upper half space $\H^3$ as a subset of Hamiltonian quaternions with vanishing fourth coordinate, we consider a point $P = z + rj \in \H^3$ where $r > 0$ and $z = x + iy \in \C.$  We equip $\H^3$ with the hyperbolic metric and consider the Laplace-Beltrami operator $\Delta$ corresponding to this metric defined as follows:
$$\Delta := r^2\left(\frac{\partial ^2}{\partial x^2} + \frac{\partial ^2}{\partial y^2} + \frac{\partial ^2}{\partial z^2}\right) - r\frac{\partial}{\partial r}.$$
Let $\mathcal O_K$ denote the ring of integers of $K$ and let $\Gamma = \mathrm{PSL}_2 (\mathcal{O}_K)$. We consider the usual action of $\Gamma$ on the hyperbolic $3$-space $\H^3$ and let $\V$ denote the volume of the quotient $\Gamma \backslash \H^3.$  

For a prime $\gp = (\varpi)$, let 
$$
\mathcal M_\gp = \{ M \in \mathrm{GL}_2(\mathcal O_K):\,\det M = \varpi\}.
$$
$\Gamma$ acts on $\mathcal M_\gp$ by left multiplication.  Let $\mathcal V_\gp$ be a system of representatives for the orbits of $\mathcal M_\gp$ modulo $\Gamma.$
For $f \in \Gamma \backslash \H^3 \to \C,$ the normalized Hecke operator $T_\gp$ is defined as
$$(T_\gp f)(P) = \frac{1}{\sqrt{\mathrm{Nm}(\gp)}}\sum_{M \in \mathcal V_\gp}f(MP),$$
where $\mathrm{Nm}(\gp)$ denotes the norm of $\gp.$ 
The Hecke operators $T_\gp$ satisfy multiplicative and recursive relations similar to those satisfied by the classical Hecke operators. That is, 
$$
 T (\gp) T (\gp^m) = T(\gp^{m+1}) + T (\gp^{m-1})
 $$
for a positive integer $m$. 

Let $\{e_j\}_{j \geq 0}$ denote an orthonormal basis of eigenfunctions with eigenvalues $\lambda_j$ for the operator $-\Delta$ in the discrete spectrum of $L^2 (\Gamma \backslash \mathcal H)$ such that for every $j \geq 0$ and for every prime $\gp \in \mathcal{O}_K,$
$$T (\gp^{m}) e_j = \rho_j (\gp^m) e_j.$$
In the above, we let $e_0$ denote a constant function.
As in the classical case of modular cusp forms, we have $\rho_j(\gp) \in [-2,2]$ and it is natural to study the distribution of $( \rho_j(\gp))$ in $[-2,2]$ as we vary the prime $\gp$ or $j \geq 1.$  This investigation was carried out by Imamoglu and Raulf \cite{IR}, who proved the following theorem:

\begin{thm}[{\cite[Main Theorem]{IR}}] \label{vertical-Bianchi}
Let $I$ be a subinterval of $[-2,2].$  For a fixed prime $\gp$ and $T > 0,$ we define
$$\lambda(T) = \#\{j:\,\lambda_j \leq T\}$$
and
$$N_I(\gp,T) := \#\{\lambda_j \leq T:\,\rho_j(\gp) \in I\}.$$
Then,
$$\lim_{T \to \infty} \frac{N_I(\gp,T)}{\lambda(T)} = \frac{1}{\pi}\int_I \frac{\left(1 + \frac{1}{N(\gp)}\right)}{\left(1 + \frac{1}{N(\gp)}\right)^2 - \frac{t^2}{N(\gp)}} \sqrt{1 - \frac{t^2}{4}} dt.$$
\end{thm}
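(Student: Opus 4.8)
The plan is to prove the equidistribution by Weyl's criterion, reducing it to a computation of Chebyshev-weighted spectral averages that is then carried out by a Hecke-twisted Selberg trace formula on $\Gamma \backslash \H^3$. The multiplicative recursion $T(\gp)T(\gp^m) = T(\gp^{m+1}) + T(\gp^{m-1})$ gives, on each eigenfunction $e_j$, the relation $\rho_j(\gp^m) = X_m(\rho_j(\gp))$, where $X_m$ is the Chebyshev-type polynomial of Section \ref{Hilbert} normalised by $X_m(2\cos\phi) = \sin((m+1)\phi)/\sin\phi$. Since the $X_m$ span the polynomials, which are dense in $C([-2,2])$, and every $\rho_j(\gp) \in [-2,2]$, the asserted weak convergence of the empirical measures $\frac{1}{\lambda(T)}\sum_{\lambda_j \le T}\delta_{\rho_j(\gp)}$ to the measure $\mu_\gp$ on the right-hand side is equivalent to showing, for each fixed $m \ge 0$, that
\begin{equation*}
\lim_{T \to \infty}\frac{1}{\lambda(T)}\sum_{\lambda_j \le T}\rho_j(\gp^m) = \int_{-2}^2 X_m(t)\,d\mu_\gp(t).
\end{equation*}
Substituting $t = 2\cos\phi$ turns the right-hand integral into a Poisson-type kernel integral, which evaluates to $\mathrm{Nm}(\gp)^{-m/2}$ when $m$ is even and to $0$ when $m$ is odd (the case $m=0$ recording that $\mu_\gp$ is a probability measure, i.e.\ the Weyl law $\lambda(T) \sim c\,\V\,T^{3/2}$).

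The main tool for the left-hand side is the Selberg trace formula for $-\Delta$ on $\Gamma \backslash \H^3$ twisted by $T(\gp^m)$. I would choose an even, nonnegative test function $h = h_T$ of Paley--Wiener type localising the spectral parameters $r_j$, where $\lambda_j = 1 + r_j^2$, to the window $r_j \le \sqrt{T-1}$, so that $\sum_j \rho_j(\gp^m)\,h_T(r_j)$ approximates $\sum_{\lambda_j \le T}\rho_j(\gp^m)$. The spectral side consists of this sum together with the contribution of the continuous spectrum; since $K$ has class number one there is a single cusp, and the Eisenstein contribution is of lower order. The geometric side is an Eichler--Selberg expansion indexed by the $\Gamma$-conjugacy classes of elements of $\mathcal M_{\gp^m}$, which split into central, elliptic, and hyperbolic/loxodromic (and parabolic) contributions.

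The only term proportional to the Weyl count $\lambda(T) \sim c\,\V\,T^{3/2}$ is the central one, arising from the scalar matrix $\varpi^{m/2}I \in \mathcal M_{\gp^m}$, which exists precisely when $m$ is even. Acting trivially on $\H^3$, it reproduces the identity contribution scaled by the Hecke normalisation $\mathrm{Nm}(\gp^m)^{-1/2} = \mathrm{Nm}(\gp)^{-m/2}$, and hence contributes $\mathrm{Nm}(\gp)^{-m/2}\lambda(T)(1 + o(1))$ for even $m$ and nothing for odd $m$ --- exactly matching the moment computed above. It then remains to show that all remaining geometric terms, together with the Eisenstein term, are $o(T^{3/2})$. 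For fixed $\gp^m$ the elliptic terms form a finite sum of class numbers of orders in imaginary quadratic extensions, each weighted by a spectral transform of $h_T$ evaluated at the rotation angle of the corresponding class, while the hyperbolic/loxodromic terms involve such transforms along the relevant geodesic lengths. Bounding these transforms uniformly in $T$ and confirming that they, the parabolic term, and the Eisenstein term are genuinely of lower order than $T^{3/2}$ is the technical heart of the argument, and the step I expect to be the main obstacle; it is precisely here that the fine estimates for the $\H^3$ trace formula developed in \cite{IR} enter. Granting these bounds, Weyl's criterion yields the theorem.
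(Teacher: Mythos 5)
Your proposal is correct and follows essentially the same route as the paper's source: the paper quotes this theorem from \cite{IR} without reproving it, and the argument you outline --- Weyl's criterion applied to the moments $\frac{1}{\lambda(T)}\sum_{\lambda_j \le T} X_m(\rho_j(\gp))$, computed via the Hecke-twisted Selberg trace formula on $\Gamma \backslash \H^3$ with the central (scalar) contribution producing the main term $\delta_{2|m}\,\mathrm{Nm}(\gp)^{-m/2}\lambda(T)$ --- is exactly how Imamoglu--Raulf proceed. Note moreover that the technical step you flag as the main obstacle is precisely what Proposition \ref{trace-Bianchi} packages: it already gives the sharp-cutoff estimate $\sum_{\lambda_j \le T} X_m(2\cos\pi\theta_j(\gp)) = \frac{\V}{8\pi^{3/2}}\,\delta_{2|m}\,\mathrm{Nm}(\gp)^{-m/2}\,T^{3/2} + O(T)$, so the Paley--Wiener smoothing and the lower-order bounds on the elliptic, loxodromic, parabolic and Eisenstein terms are subsumed in that single statement.
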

The techniques in \cite{IR} can be extended to prove the following theorem:

\begin{thm}\label{average-ST-Bianchi}
Let $I$ be a subinterval of $[-2,2]$ and $\pi_K (x) = \# \{\gp :\,  N(\gp) \le x \}$.   For $j\geq 1,$ we define
$$N_I(j,x) = \#\{\gp:\,N(\gp) \leq x \text{ and }\rho_j(\gp) \in I\}.$$
Then,
$$\lim_{x \to \infty}\lim_{T \to \infty} \frac{\sum_{\lambda_j \leq T}N_I(j,x)}{\lambda(T)\pi_K(x)} = \frac{1}{\pi}\int_I  \sqrt{1 - \frac{t^2}{4}}dt.$$
\end{thm}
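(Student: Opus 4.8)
The plan is to prove the theorem by Weyl's equidistribution criterion (the method of moments), using the Chebyshev polynomials $X_m$ introduced in Section \ref{Hilbert} as test functions. Recall that $X_m(2\cos\pi\theta) = \sin((m+1)\pi\theta)/\sin\pi\theta$, and that these polynomials are orthonormal with respect to the Sato--Tate measure $d\mu_{\mathrm{ST}}(t) = \tfrac1\pi\sqrt{1 - t^2/4}\,dt$ on $[-2,2]$; in particular $\int_{-2}^2 X_m\,d\mu_{\mathrm{ST}} = \delta_{m,0}$. Since $\{X_m\}_{m\ge 0}$ spans the polynomials, which are dense in $C([-2,2])$, it suffices to show that for every $m\ge 1$,
$$
\lim_{x\to\infty}\lim_{T\to\infty}\frac{1}{\lambda(T)\pi_K(x)}\sum_{\lambda_j\le T}\ \sum_{N(\gp)\le x} X_m(\rho_j(\gp)) = 0,
$$
the case $m=0$ merely recording that $\mu_{\mathrm{ST}}$ is a probability measure. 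First I would invoke the Hecke recursion $T(\gp)T(\gp^m) = T(\gp^{m+1}) + T(\gp^{m-1})$, which forces the eigenvalues to obey the Chebyshev relation, so that $X_m(\rho_j(\gp)) = \rho_j(\gp^m)$. This converts each inner double sum into the spectral sum $\sum_{\lambda_j\le T}\rho_j(\gp^m)$, the object to which the trace formula of Imamoglu--Raulf applies.

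The key structural feature is the nested order of limits. For fixed $x$ the sum over primes with $N(\gp)\le x$ is finite, so the inner limit in $T$ may be taken termwise:
$$
\lim_{T\to\infty}\frac{1}{\lambda(T)}\sum_{N(\gp)\le x}\sum_{\lambda_j\le T}X_m(\rho_j(\gp)) = \sum_{N(\gp)\le x}\ \lim_{T\to\infty}\frac{1}{\lambda(T)}\sum_{\lambda_j\le T}X_m(\rho_j(\gp)).
$$
By Theorem \ref{vertical-Bianchi}, the empirical spectral measures $\tfrac{1}{\lambda(T)}\sum_{\lambda_j\le T}\delta_{\rho_j(\gp)}$ converge weakly, as $T\to\infty$, to the $\gp$-adic Plancherel measure $\mu_\gp$ of density $\tfrac1\pi\, f_\gp(t)\sqrt{1-t^2/4}$, where $f_\gp(t) = (1 + N(\gp)^{-1})\big/\big((1+N(\gp)^{-1})^2 - t^2 N(\gp)^{-1}\big)$. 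Since $X_m$ is continuous and bounded on the compact interval $[-2,2]$, each inner limit equals the moment $c_m(\gp) := \int_{-2}^2 X_m\,d\mu_\gp$, and the quantity of interest reduces to the prime average $\lim_{x\to\infty}\tfrac{1}{\pi_K(x)}\sum_{N(\gp)\le x}c_m(\gp)$.

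It then remains to control these Plancherel moments. Writing $q = N(\gp)$ and expanding $f_\gp(t) = (1+q^{-1})\big/\big(1 + (2-t^2)q^{-1} + q^{-2}\big)$, one sees that $f_\gp(t) = 1 + \O(q^{-1})$ uniformly for $t\in[-2,2]$. Hence
$$
c_m(\gp) = \int_{-2}^2 X_m\,d\mu_{\mathrm{ST}} + \int_{-2}^2 X_m(t)\,(f_\gp(t) - 1)\,d\mu_{\mathrm{ST}}(t) = \delta_{m,0} + \O_m\!\left(\frac{1}{N(\gp)}\right),
$$
using that $X_m$ is bounded on $[-2,2]$. For $m\ge 1$ this gives $c_m(\gp)\to 0$ as $N(\gp)\to\infty$, and an elementary Ces\`aro argument (split the average at a threshold $N(\gp)\le x_0$ beyond which $|c_m(\gp)|<\varepsilon$, then use $\pi_K(x)\to\infty$) shows the prime average tends to $0$; for $m=0$ it is identically $1$. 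Feeding these values into Weyl's criterion yields convergence of the doubly averaged empirical measure to $\mu_{\mathrm{ST}}$, which is the assertion of the theorem.

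The genuine analytic input is entirely contained in Theorem \ref{vertical-Bianchi}; the remaining work is soft. I expect the only real subtlety to be conceptual rather than computational: the nested limits (inner in $T$, outer in $x$) are exactly what permit one to apply the vertical result prime-by-prime and thereby avoid any uniformity in $\gp$. Were one instead to demand a single joint limit, or the limits in the opposite order, the argument would collapse and one would be forced to re-derive the Selberg trace formula on $\Gamma\backslash\H^3$ twisted by Hecke operators, now with an error term explicit and uniform in the Hecke parameter attached to $\gp^m$ — a substantially harder task. With the limits as stated, the main obstacle is merely the bookkeeping needed to justify the termwise passage to the $T$-limit and to make the comparison $\mu_\gp\to\mu_{\mathrm{ST}}$ rigorous.
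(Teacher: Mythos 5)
Your argument is correct. Note that the paper does not actually write out a proof of Theorem \ref{average-ST-Bianchi}: it asserts that the techniques of Imamoglu--Raulf extend, and the apparatus it supplies is the trace formula of Proposition \ref{trace-Bianchi}. The intended route is therefore to read the per-prime moments off that formula directly: taking $h=1$, $\underline{m}=(m)$ there, and dividing by $\lambda(T)=\frac{\V}{8\pi^{3/2}}T^{3/2}+\O(T)$ (the $m=0$ case), one gets
$$
\lim_{T\to\infty}\frac{1}{\lambda(T)}\sum_{\lambda_j\le T}X_m\bigl(\rho_j(\gp)\bigr)=\delta_{2|m}\,\mathrm{Nm}(\gp)^{-m/2},
$$
and then one averages over $\mathrm{Nm}(\gp)\le x$ exactly as you do. You instead black-box Theorem \ref{vertical-Bianchi} (weak convergence of the spectral empirical measures to the Plancherel measure $\mu_\gp$ for each fixed $\gp$) and recover the same numbers by integrating $X_m$ against $\mu_\gp$, via the uniform expansion $f_\gp=1+\O(\mathrm{Nm}(\gp)^{-1})$; your estimate $c_m(\gp)=\delta_{m,0}+\O_m(\mathrm{Nm}(\gp)^{-1})$ is consistent with (and in fact the exact value is) $\delta_{2|m}\mathrm{Nm}(\gp)^{-m/2}$. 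The two routes are equivalent in substance. The trace-formula route is more self-contained within the paper and quantitative --- explicit $\O(T)$ errors, uniform in $\gp$ and $m$, which is what the pair-correlation Theorem \ref{Bianchi-main} genuinely needs --- whereas your version correctly isolates why this soft statement needs no such uniformity: the iterated order $\lim_{x}\lim_{T}$ makes the $T$-limit termwise over the finitely many $\gp$ with $\mathrm{Nm}(\gp)\le x$, and the outer limit is a Ces\`aro average of quantities tending to the semicircle values. Two small points to make explicit in a write-up: passing between interval convergence in Theorem \ref{vertical-Bianchi}, weak convergence, and back to $\nu_x(I)\to\frac{1}{\pi}\int_I\sqrt{1-t^2/4}\,dt$ uses that all limit measures have continuous densities, so interval boundaries carry no mass; and the identity $X_m(\rho_j(\gp))=\rho_j(\gp^m)$ from the Hecke recursion, while true, is never actually used in your argument, since you never invoke the trace formula.
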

Following the theme of this article, we define the Hecke angles $\theta_j (\gp) \in [0,1]$ such that $\rho_j (\gp) = 2 \cos (\pi \theta_j (\gp))$. We investigate the pair correlation function of families of straightened Hecke angles $\theta_j (\gp) $. 
 The key tool in this investigation will be an explicit trace formula for Hecke operators acting on these spaces.   This formula was derived by Raulf \cite{Raulf}) and was used by Imamoglu and Raulf \cite{IR} to derive Theorem \ref{vertical-Bianchi}.  The following formula immediately follows from the work of \cite{Raulf} and \cite{IR}.
\begin{prop}[{\cite[Theorem 2.8]{Raulf} and \cite[Lemma 3.13]{IR}}] \label{trace-Bianchi} 
Let $\gp_1, \dots, \gp_h$ be distinct primes and let $\underline{m} = (m_1, \dots , m_h)$ be a tuple of non-negative integers. Let $\mathfrak a = \gp_1^{m_1} \cdots \gp_h^{m_h}$. Then, for any $T > 0$ 
$$
\sum_{\lambda_j \le T} \prod_{i=1}^h X_{m_i} (2 \cos \pi \theta_j (\gp_i))  = 
 \frac{\V}{8\pi^{3/2}} \delta_{2 | \underline{m}} \mathrm{Nm}(\mathfrak a)^{-1/2} T^{3/2} + O(T).
$$
\end{prop}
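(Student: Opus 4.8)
The plan is to reduce the product of Chebyshev polynomials on the left-hand side to a single Hecke eigenvalue, recognize the resulting sum as a spectral trace, and then read off the claimed asymptotic from Raulf's trace formula. First I would note that the polynomials $X_m$ are the Chebyshev polynomials of the second kind: writing $t = 2\cos\pi\theta$, the identity $X_m(t) = \sin((m+1)\pi\theta)/\sin\pi\theta$ gives $X_0(t) = 1$, $X_1(t) = t$, and the three-term recursion $X_{m+1}(t) = t\,X_m(t) - X_{m-1}(t)$. This is exactly the recursion satisfied by the normalized Hecke operators through $T(\gp)T(\gp^m) = T(\gp^{m+1}) + T(\gp^{m-1})$. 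Evaluating on the eigenfunction $e_j$ therefore yields $X_m(2\cos\pi\theta_j(\gp)) = X_m(\rho_j(\gp)) = \rho_j(\gp^m)$. Since the primes $\gp_1,\dots,\gp_h$ are distinct and Hecke eigenvalues are multiplicative over coprime indices,
$$\prod_{i=1}^h X_{m_i}(2\cos\pi\theta_j(\gp_i)) = \prod_{i=1}^h \rho_j(\gp_i^{m_i}) = \rho_j(\mathfrak a),$$
so the left-hand side is precisely the truncated spectral trace $\sum_{\lambda_j \le T}\rho_j(\mathfrak a)$ of the normalized Hecke operator $T(\mathfrak a)$.

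Next I would apply the Selberg-type trace formula for Hecke operators on $\Gamma\backslash\H^3$ of \cite[Theorem 2.8]{Raulf} to $T(\mathfrak a)$. To realize the sharp cutoff I would insert a test function $h = h_T$ that approximates the indicator of the band $\lambda_j \le T$ in the spectral parameter $r_j$ (with $\lambda_j = 1 + r_j^2$), smoothed on a scale chosen so that the passage from the smoothed spectral sum to the sharp count $\sum_{\lambda_j \le T}\rho_j(\mathfrak a)$ costs only $O(T)$.

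The core of the argument is the geometric side. The main term comes solely from the scalar (central) matrices among the coset representatives defining $T(\mathfrak a)$: a scalar matrix $\mathrm{diag}(\alpha,\alpha)$ has determinant $\alpha^2$, so it occurs in $\mathcal M_{\mathfrak a}$ exactly when $\mathfrak a$ is a perfect square, i.e. when every $m_i$ is even. This is the origin of the factor $\delta_{2|\underline{m}}$. When it is present, this central contribution produces a Weyl-law main term whose growth $T^{3/2}$ and volume factor $\V$ come from the hyperbolic Plancherel density on $\H^3$, while the normalizing factor $1/\sqrt{\mathrm{Nm}(\gp)}$ attached to each $T(\gp)$ assembles into $\mathrm{Nm}(\mathfrak a)^{-1/2}$; collecting the constants yields the stated coefficient $\V/(8\pi^{3/2})$. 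This is exactly the single prime-power computation of \cite[Lemma 3.13]{IR}, and the multiplicative reduction above lets that computation run verbatim for a general $\mathfrak a$.

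Finally I would bound the remaining geometric terms — the elliptic and hyperbolic (loxodromic) conjugacy classes together with the parabolic contribution and the scattering term from the continuous spectrum — and show that their total is $O(T)$, so that they are absorbed into the error. The step I expect to be the main obstacle is precisely this uniform error control: one must choose the smoothing scale of $h_T$ so that the truncation error and all non-central geometric terms remain $O(T)$, and check that the dependence on $\mathrm{Nm}(\mathfrak a)$ entering through these terms does not interfere with the main term. Since both Raulf's trace formula and the needed estimates on its geometric side are already established in \cite{Raulf} and \cite{IR}, the proposition follows once the reduction to $\rho_j(\mathfrak a)$ and the identification of the square condition are in place.
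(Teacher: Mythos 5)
Your proposal is correct and follows essentially the same route as the paper, which states this proposition as an immediate consequence of Raulf's trace formula \cite[Theorem 2.8]{Raulf} and \cite[Lemma 3.13]{IR} rather than proving it afresh: the intended deduction is precisely your reduction of $\prod_{i=1}^h X_{m_i}(2\cos\pi\theta_j(\gp_i))$ to $\rho_j(\mathfrak a)$ via the Chebyshev/Hecke recursion and multiplicativity at coprime primes, followed by reading off the scalar-matrix (square ideal) main term and the $O(T)$ geometric remainder from the cited results. The uniformity in $\mathrm{Nm}(\mathfrak a)$ that you flag as the potential obstacle is exactly what the paper's closing remark records as a feature of Raulf's formula, namely that its error terms carry no powers of $\mathrm{Nm}(\mathfrak a)$.
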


As before, we fix $0 < \psi < 1$ and an interval $\mathcal I_L$ and define
$$
\Lj := \Lj\,(x,L,\psi) := \#\left\{\gp \leq x: \,\theta_j (\gp) \in\mathcal I_L \right\},
$$
and
\begin{equation*}
R_{j, x,L}(s) := \frac{1}{\Lj} \#\left \{ 1 \leq \gp \neq \gq \leq x:
 \theta_j (\gp),\,\theta_j (\gq) \in \mathcal I_L, |H(\theta_j (\gp)) - H(\theta_j (\gq))|  \leq \frac{2s}{L\Lj} \right\}.
\end{equation*}
Finally, we define the { local pair correlation function around $\psi$} as
$$
R_{j,\psi}(s) := \lim_{x \to \infty} R_{j,x,L}(s).
$$

Just as in the previous sections, we derive the following theorem:
\begin{thm}\label{Bianchi-main}
Assume the Sato-Tate conjecture for the sequence $\{\theta_j (\gp)\}_{\mathrm{Nm}(\gp) \to \infty}$. Let $0 < \psi < 1$ and $L = L(x) \approx \log \log x.$ For any real number $s >0$, the limit 
$$
\lim_{x \to \infty \atop{ T \to \infty}} \frac{1}{ \lambda (T)} \sum_{\lambda_j \leq T} R_{j,x,L}(s) = 2s
$$
That is, 
$$
 \frac{1}{ \lambda (T)} \sum_{\lambda_j \leq T}  R_{j,\psi}(s) \sim 2s \text{ as } T \to \infty.
$$
\end{thm}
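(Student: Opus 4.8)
The plan is to replicate the argument of Sections~\ref{smooth}--\ref{Thm2-proof} essentially verbatim, with the spectral average $\frac{1}{\lambda(T)}\sum_{\lambda_j \le T}(\cdot)$ playing the role of the newform average $\langle\cdot\rangle$ and Raulf's trace formula (Proposition~\ref{trace-Bianchi}) playing the role of the Eichler--Selberg estimates (Propositions~\ref{trace} and~\ref{pq}). First I would invoke the assumed Sato--Tate equidistribution for $\{\theta_j(\gp)\}$ to carry out the localization of Section~\ref{smooth}: for each fixed $j$ it gives $\Lj \sim \pi_K(x)\,A\,\tfrac{2}{L}$ with $A = 2\sin^2\pi\psi$, together with the approximation $H(\theta_j(\gp)) - H(\theta_j(\gq)) \sim A(\theta_j(\gp)-\theta_j(\gq))$, so that $R_{j,x,L}(s)$ reduces, exactly as in~\eqref{localizing}, to a rescaled count of the differences $\theta_j(\gp)-\theta_j(\gq)$. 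Smoothing this count with the same even test functions $g,\rho \in C^\infty(\R)$ of compactly supported Fourier transform, I obtain the Bianchi analogue $R_2(g,\rho)(j)$ and its expansion~\eqref{R4} into cosine sums, where the key arithmetic input is now the identity $2\cos 2\pi m\,\theta_j(\gp) = X_{2m}(2\cos\pi\theta_j(\gp)) - X_{2m-2}(2\cos\pi\theta_j(\gp)) = \rho_j(\gp^{2m}) - \rho_j(\gp^{2m-2})$.

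Since the eigenvalues $\rho_j(\gp^k) = X_k(2\cos\pi\theta_j(\gp))$ obey the recursion $T(\gp)T(\gp^m) = T(\gp^{m+1}) + T(\gp^{m-1})$, they satisfy the same multiplicative relations as the classical Hecke eigenvalues (Lemma~\ref{Hecke-multiplicative}); consequently the products $A_{\gp}(l)A_{\gq}(l')$, $A_{\gp}(n)A_{\gq}(n)$, and so on decompose into linear combinations of terms $X_{2a}(2\cos\pi\theta_j(\gp))\,X_{2b}(2\cos\pi\theta_j(\gq))$ precisely as in~\eqref{mult-1}--\eqref{mult-5}, with the combinatorial array $\mathcal T(n,l,l')$ unchanged. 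The crucial replacement for Proposition~\ref{pq} is then supplied directly by Proposition~\ref{trace-Bianchi}: the main term survives only when all exponents are even ($\delta_{2|\underline m}=1$), so for fixed distinct primes $\gp,\gq$ and $(a,b)\neq(0,0)$, dividing by $\lambda(T) = \tfrac{\V}{8\pi^{3/2}}T^{3/2} + \O(T)$ gives
\[
\frac{1}{\lambda(T)}\sum_{\lambda_j \le T} X_{2a}(2\cos\pi\theta_j(\gp))\,X_{2b}(2\cos\pi\theta_j(\gq)) = \mathrm{Nm}(\gp)^{-a}\,\mathrm{Nm}(\gq)^{-b} + \O\!\left(T^{-1/2}\right),
\]
the analogue of $\langle a_f(p^{2a}q^{2b})\rangle = p^{-a}q^{-b} + (\text{error})$. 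Summing the main terms over prime ideals $\gp \neq \gq$ with $\mathrm{Nm}(\gp),\mathrm{Nm}(\gq)\le x$ via the number-field Mertens estimate $\sum_{\mathrm{Nm}(\gp)\le x}\mathrm{Nm}(\gp)^{-1} = \O(\log\log x)$ (and $\O(1)$ for higher powers) then reproduces, term by term, the Bianchi counterparts of Proposition~\ref{inner-1}(a)--(g) and hence of Theorem~\ref{expected}: the $S(g,\rho)$ contributions are of size $\O(1/L)$ and vanish, while the $T(g,\rho)$ contributions collapse, through the same trigonometric simplification and Riemann--Lebesgue argument, to $C_\psi\,\widehat{g}(0)\,(\rho\ast\rho)(0)$. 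Normalizing $\rho$ so that $\rho^2$ has mass one and rescaling $\mathcal I_L$ to length $2C_\psi/L$ as in Remark~\ref{rmk-thm}, and then sandwiching the sharp cutoff $\chi_{[-s,s]}$ between smooth $g$ with $\widehat{g}(0)=\int g \to 2s$, yields the value $2s$.

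The one structural point that must be handled with care is the order of limits, and here the Bianchi case is in fact cleaner than the classical one. Because the trace-formula error $\O(T)$ is a full power $T^{1/2}$ smaller than the main term $T^{3/2}$, for each fixed $x$ the prime side is a finite sum over the $\O(\pi_K(x)^2)$ pairs of ideals and over the finitely many Fourier modes $l,l' \ll L$ and $n \ll \pi_K(x)$, so letting $T\to\infty$ first drives every averaged error to zero with no arithmetic constraint relating $T$ to $x$. This is precisely why Theorem~\ref{Bianchi-main} needs no growth hypothesis analogous to $\frac{\log kN}{x}\to\infty$: the spectral average converts each cosine sum into its expected value exactly, and only afterwards does one take $x\to\infty$ with $L = L(x)\approx\log\log x$ to extract the limit. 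The main obstacle is therefore not any new estimate but the bookkeeping needed to confirm that the implied constant in the $\O(T)$ error, which may depend on $\mathrm{Nm}(\mathfrak a)$, remains harmless; this is automatic, since for fixed $x$ the ideals $\mathfrak a$ that arise range over a finite set.
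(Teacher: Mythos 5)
Your proposal is correct and follows essentially the same route as the paper, which proves Theorem \ref{Bianchi-main} by exactly the substitution you describe: the spectral average over $\lambda_j \le T$ in place of the newform average, and Raulf's trace formula (Proposition \ref{trace-Bianchi}) in place of Propositions \ref{trace} and \ref{pq}, with the localization, smoothing, Chebyshev/Hecke multiplicative relations, and Riemann--Lebesgue steps of Sections \ref{smooth}--\ref{Thm2-proof} carried over verbatim. Your closing point about the order of limits is precisely the content of the paper's remark after the theorem: because the $\O(T)$ error term involves no powers of $\mathrm{Nm}(\mathfrak a)$, no growth condition linking $T$ to $x$ (analogous to $\frac{\log kN}{x} \to \infty$) is required.
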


\begin{remark}
We note that unlike Theorems \ref{Theorem2} and \ref{hilbert-Theorem2}, where we had to assume strong growth conditions on the weights $k(x),$ we do not require any analogous growth conditions on $T.$  This is because the error terms in Raulf's trace formula (Proposition \ref{trace-Bianchi}) do not contain powers of $\mathrm{Nm} (\mathfrak a).$  This helps in controlling the exponential sums arising in the pair correlation sums for the families considered in this section.
\end{remark}

\bibliographystyle{amsalpha}
\bibliography{biblio}
\end{document}